\newtheorem{theorem}{Theorem}[section]
\newtheorem{lemma}[theorem]{Lemma}
\theoremstyle{remark}
\newtheorem*{remark}{Remark}
\newcommand{\et}{\quad\mbox{and}\quad}
\newcommand{\bN}{\mathbb{N}}
\newcommand{\bP}{\mathbb{P}}
\newcommand{\bQ}{\mathbb{Q}}
\newcommand{\bR}{\mathbb{R}}
\newcommand{\bxi}{\boldsymbol{\xi}}
\newcommand{\bZ}{\mathbb{Z}}
\newcommand{\SSS}{S}
\newcommand\ee{\varepsilon}
\newcommand\GrO{\mathcal{O}} 
\newcommand{\LL}{\mathcal{L}_{\bxi}}
\newcommand{\norm}[1]{\|#1\|}
\renewcommand\phi{\varphi}
\renewcommand\theta{\vartheta}
\newcommand{\ux}{\mathbf{x}}
\newcommand{\uy}{\mathbf{y}}
\newcommand{\uz}{\mathbf{z}}
\newcommand{\VV}{{\mathcal{V}}}
\newcommand{\hlambda}{\widehat{\lambda}}
\begin{document}

\baselineskip=17pt 

\title{A transference principle for simultaneous rational approximation}

\author{Ngoc Ai Van Nguyen}
\address{
  University of Information Technology\\
  Vietnam National University, Ho Chi Minh City\\
  Vietnam}
\email{nnaivan@gmail.com}
\author{Anthony Po\"els}
\address{
   D\'epartement de Math\'ematiques\\
   Universit\'e d'Ottawa\\
   150 Louis Pasteur\\
   Ottawa, Ontario K1N 6N5, Canada}
\email{anthony.poels@uottawa.ca}
\author{Damien Roy}
\address{
   D\'epartement de Math\'ematiques\\
   Universit\'e d'Ottawa\\
   150 Louis Pasteur\\
   Ottawa, Ontario K1N 6N5, Canada}
\email{droy@uottawa.ca}
\subjclass[2010]{Primary 11J13; Secondary 11J82}
\thanks{Work of the three authors partially supported by NSERC}

\keywords{exponents of Diophantine approximation, heights,
  Marnat-Moshchevitin transference inequalities,
  measures of rational approximation, simultaneous approximation.}

\begin{abstract}
\noindent We establish a general transference principle for the irrationality
measure of points with $\bQ$-linearly independent coordinates in $\bR^{n+1}$,
for any given integer $n\geq 1$. On this basis, we recover an important inequality
of Marnat and Moshchevitin which describes the spectrum of the pairs of ordinary
and uniform exponents of rational approximation to those points. For points whose
pair of exponents are close to the boundary in the sense that they almost
realize the equality, we provide additional information about the corresponding
sequence of best rational approximations. We conclude with an application.
\end{abstract}

\maketitle

\section{Introduction}
\label{section: resultat principal}

Let $n$ be a positive integer and let $\bxi=(\xi_0,\dots,\xi_n)$ be
a point of $\bR^{n+1}$ whose coordinates are linearly independent
over $\bQ$. For any integer point $\ux = (x_0,\dots,x_n)\in\bZ^{n+1}$ we set
\[
    L_{\bxi}(\ux) = \max_{1\leq k \leq n}|\xi_0x_k-\xi_kx_0|,
\]
and for each $X\geq 1$ we define
\begin{equation}
    \label{eq: def fonction L(X)}
    \LL(X) = \min\{L_{\bxi}(\ux)\, ; \, \ux\in\bZ^{n+1}\setminus\{0\}, \norm{\ux}\leq X\},
\end{equation}
where $\norm{\cdot}$ denotes the usual Euclidean norm in $\bR^{n+1}$.
The behavior of this irrationality measure $\LL$ is roughly captured
by the quantities
\begin{align}
    \label{eq: def expo}
    \lambda(\bxi) = \sup\{ \lambda\, ; \, \liminf_{X\rightarrow\infty} X^{\lambda}\LL(X) < \infty \}
    \quad \textrm{and}\quad
    \hlambda(\bxi) = \sup\{ \lambda\, ; \, \limsup_{X\rightarrow\infty} X^{\lambda}\LL(X) < \infty \}
\end{align}
which are called respectively the ordinary and the uniform exponents of rational
approximation to $\bxi$. It is well known that they satisfy
\begin{equation}
    \label{eq: partie spectre expo}
    \frac{1}{n} \leq \hlambda(\bxi) \leq 1
    \quad\textrm{and}\quad
    \hlambda(\bxi)\leq \lambda(\bxi) \leq \infty,
\end{equation}
the inequality $\hlambda(\bxi)\geq 1/n$ coming from Dirichlet's box
principle \cite[Theorem 1A, Chapter II]{Schmidt1980}. The study of such
Diophantine exponents goes back to Jarn\'ik \cite{jarnik1938khintchineschen}
and Khinchine \cite{khintchine1926klasse, khintchine1926metrischen}
and remains a topic of much research. Recently Marnat and Moshchevitin
\cite{marnat2018optimal} proved the following inequality conjectured by
Schmidt and Summerer \cite[Section 3, p.~92]{schmidt2013simultaneous}.

\begin{theorem}[Marnat-Moshchevitin]
\label{thm: Marnat-Moshchevitin simultanee}
Let $\bxi\in\bR^{n+1}$ be a point whose coordinates are linearly independent
over $\bQ$. We have
\begin{align}
    \label{eq: Marnat-Moshchevitin approx simultanee reecrite}
    \hlambda(\bxi) + \frac{\hlambda(\bxi)^2}{\lambda(\bxi)}+\dots
      + \frac{\hlambda(\bxi)^{n}}{\lambda(\bxi)^{n-1}}
     \leq 1,
\end{align}
the ratio $\hlambda(\bxi)/\lambda(\bxi)$ being interpreted as $0$ when
$\lambda(\bxi)=\infty$.
\end{theorem}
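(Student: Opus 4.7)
My plan is to proceed via the classical sequence of best approximations combined with a determinantal argument, which is the standard mechanism underlying transference-type inequalities of this flavor.

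I first introduce the sequence of minimal points $\uu_1,\uu_2,\ldots$ attached to $\LL$: nonzero integer vectors with $X_i := \|\uu_i\|$ strictly increasing and $L_i := L_\bxi(\uu_i)$ strictly decreasing, chosen so that $\LL(X) = L_i$ for $X\in[X_i, X_{i+1})$. For any indices $i_0<i_1<\cdots<i_n$ such that the $\uu_{i_j}$ are $\bZ$-linearly independent, the corresponding $(n{+}1)\times(n{+}1)$ determinant is a nonzero integer, hence has absolute value at least $1$. Writing each $\uu_i = \alpha_i\bxi+\uv_i$ with $\uv_i\perp\bxi$, one has $|\alpha_i|\ll X_i$ and $\|\uv_i\|\ll L_i$, since the $n$ linear forms $\ux\mapsto \xi_0 x_k-\xi_k x_0$ form a basis of the dual of $\bxi^\perp$. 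Expanding the determinant by multilinearity kills all terms containing more than one factor $\bxi$, and monotonicity of $(X_i)$ and $(L_i)$ places the dominant surviving contribution at $k=n$, yielding the fundamental estimate
\[
X_{i_n}\, L_{i_0} L_{i_1}\cdots L_{i_{n-1}}\;\gg\;1
\]
for every such linearly independent chain.

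Next, given a fixed $\ee>0$, I pick an index $i_0$ along a subsequence realizing $L_{i_0}\leq X_{i_0}^{-\lambda+\ee}$, and I construct indices $i_0<i_1<\cdots<i_n$ such that $\uu_{i_0},\ldots,\uu_{i_n}$ remain $\bZ$-linearly independent while the index-gaps stay controlled. Substituting into the fundamental estimate the universal bound $L_{i_j}\leq X_{i_j+1}^{-\hlambda+\ee}\leq X_{i_{j+1}}^{-\hlambda+\ee}$ for $1\leq j\leq n-1$ (coming from the uniform exponent) and taking logarithms produces a linear inequality in the quantities $t_j := \log X_{i_j}$. Optimizing over the $t_j$, equivalently parametrizing them along the geometric-like progression dictated by the interplay of $\hlambda$ and $\lambda$, and letting $\ee\to 0$ yields precisely $\hlambda+\hlambda^2/\lambda+\cdots+\hlambda^n/\lambda^{n-1}\leq 1$. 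The degenerate case $\lambda=\infty$ is treated separately and reduces to the classical bound $\hlambda(\bxi)\leq 1$.

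The main obstacle is the inductive construction of the chain $i_0<\cdots<i_n$ of indices of linearly independent minimal points with controlled spread. I would expect to handle this by tracking the ascending sequence of subspaces of $\bR^{n+1}$ spanned by initial segments of minimal points: if the first few minimal points past $\uu_{i_0}$ already span $\bR^{n+1}$, the argument goes through directly, whereas in degenerate configurations, where many consecutive minimal points lie in a proper subspace, a bootstrap restricted to that subspace must be used to produce even sharper estimates. The precise bookkeeping required to close the chain and extract the \emph{exact} Marnat--Moshchevitin expression, rather than a weaker inequality, is where a general transference principle of the type announced in the paper's title is expected to deliver its decisive technical payoff.
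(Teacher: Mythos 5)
Your ``fundamental estimate'' $X_{i_n}L_{i_0}\cdots L_{i_{n-1}}\gg 1$ is correct (it is the classical determinant bound for $n+1$ linearly independent minimal points), but it is strictly weaker than what the theorem requires, and the two places where you would need to go beyond it are exactly the places your proposal leaves open or gets wrong. First, the substitution $L_{i_j}\leq X_{i_j+1}^{-\hlambda+\ee}\leq X_{i_{j+1}}^{-\hlambda+\ee}$ points the wrong way: since $i_j+1\leq i_{j+1}$ you have $X_{i_j+1}\leq X_{i_{j+1}}$, so the second inequality fails whenever the chain is not consecutive. This cannot be dismissed as a typo, because the entire difficulty for $n\geq 3$ lives in the gap between $X_{i_j+1}$ (the norm of the next minimal point) and $X_{i_{j+1}}$ (the norm at the next jump of dimension): many minimal points may accumulate inside the proper subspace spanned by $\ux_{i_0},\dots,\ux_{i_j}$, and a single determinant gives no control on how the norms grow across that range. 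Using only the correct bounds $L_{i_j}\leq X_{i_j+1}^{-\hlambda+\ee}$ and $X_{i_j+1}\geq X_{i_j}$, the determinant inequality yields nothing better than $X_{i_n}\gg\big(X_{i_0}\cdots X_{i_{n-1}}\big)^{\hlambda-\ee}$, which does not force the geometric growth of $\log X_{i_{j+1}}$ against $\log X_{i_j}$ (with ratio roughly $\lambda(\bxi)/\hlambda(\bxi)$) that produces the nested powers $\hlambda^{k}/\lambda^{k-1}$; your ``optimization over the $t_j$'' therefore cannot close. (For $n=2$ your scheme does recover the classical Jarn\'{\i}k inequality, but that case was never the obstacle.)

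What the paper uses instead is the genuinely stronger inequality of Theorem~\ref{thm: lemme Nguyen}, namely $X_{i_1}\cdots X_{i_{n-1}}\ll (L_{i_0}X_{i_0+1})\cdots(L_{i_{n-1}}X_{i_{n-1}+1})$, which keeps all the intermediate norms on the left-hand side and hence does control those gaps. Its proof is not a determinant expansion but a descending induction on the heights of a doubly indexed family of subspaces $U^k_t\subseteq V^{k+1}_t$ built from the minimal points, glued together by Schmidt's inequality $H(A+B)\,H(A\cap B)\ll H(A)\,H(B)$; your determinant bound is essentially the degenerate case where only the top exterior power is retained. Your closing remark that the ``precise bookkeeping'' should be supplied by ``a general transference principle of the type announced in the paper's title'' concedes exactly this missing ingredient and is circular: that principle (Theorem~\ref{thm: thm transfert}, resting on Theorem~\ref{thm: lemme Nguyen} and the monotonicity Lemmas~\ref{lem: lemme intermediaire} and~\ref{lem: lemme intermediaire 2}) is the content of the proof, not something one may assume. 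As it stands, your argument establishes only a weaker, pre--Marnat--Moshchevitin transference inequality.
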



The formulation given by Marnat and Moshchevitin in \cite{marnat2018optimal}
is slightly different and is complemented by a similar result for the dual
pair of exponents which we omit here.  These authors also show that
\eqref{eq: partie spectre expo}
and \eqref{eq: Marnat-Moshchevitin approx simultanee reecrite} give a complete
description of the set of values taken by $(\lambda,\hlambda)$ at points
$\bxi\in\bR^{n+1}$ with $\bQ$-linearly independent coordinates.  Previous
to \cite{marnat2018optimal}, the problem had been considered by several authors.
The case $n=1$ of Theorem~\ref{thm: Marnat-Moshchevitin simultanee} is
classical, as it reduces to \eqref{eq: partie spectre expo}. The case $n=2$ is a
corollary of the work of Laurent \cite{laurent2006exponents}. The case $n=3$
was established by Moshchevitin in \cite{moshchevitin2012exponents}, and
revisited by Schmidt and Summerer using parametric geometry of numbers
in \cite{schmidt2013simultaneous}. For an alternative proof of the results
of \cite{marnat2018optimal} based only on parametric geometry of numbers
together with partial results towards a more general conjecture, see the PhD thesis of Rivard-Cooke \cite[Chapter~2]{PhDMartin2019}.


Given a subset $\SSS$ of $\bZ^{n+1}$, we define for each $X\geq 1$
\[
    \LL(X;\SSS)
    = \min\{L_{\bxi}(\ux)\; | \; \ux\in\SSS \textrm{ and } 0< \norm{\ux}\leq X\},
\]
with the convention that $\min \emptyset = \infty$. When
$S\nsubseteq \{0\}$, that function is eventually finite and
monotonic decreasing.  Then, upon replacing
$\LL(X)$ by $\LL(X;\SSS)$ in \eqref{eq: def expo} we obtain
two exponents $\lambda(\bxi;\SSS)$, $\hlambda(\bxi;\SSS)$
which satisfy
\begin{equation}
    \label{eq: relation expo avec S et sans S}
    0\leq\hlambda(\bxi;\SSS)\leq \hlambda(\bxi)\leq 1
     \et
    \lambda(\bxi,\SSS)\leq \lambda(\bxi).
\end{equation}
In particular, we have $\lambda(\bxi;\bZ^{n+1})=\lambda(\bxi)$ and
$\hlambda(\bxi;\bZ^{n+1})=\hlambda(\bxi)$.


The next result gives further information about the behaviour of $\LL(X;\SSS)$
as a function of $X$.

\begin{theorem}
    \label{thm: transfert exposants avec constante}
Let $\bxi\in\bR^{n+1}$ with $\bQ$-linearly independent coordinates
and let $\SSS\subseteq\bZ^{n+1}$. Suppose that there exist positive real numbers $a,b,\alpha,\beta$ such that
\begin{equation}
  \label{eq: encadrement initial version exposants thm1.2}
  bX^{-\beta} \leq \LL(X;\SSS) \leq aX^{-\alpha}
\end{equation}
for each large enough real number $X$. Then $\alpha$ and $\beta$ satisfy
     \begin{equation}
        \label{eq: thm 2 egalite ratio exposants}
        \alpha + \frac{\alpha^2}{\beta}+\dots+\frac{\alpha^n}{\beta^{n-1}} \leq 1.
    \end{equation}
In case of equality in \eqref{eq: thm 2 egalite ratio exposants}, we have
    \begin{equation}
        \label{eq: thm 1.2 estimation des constantes}
        \limsup_{X\rightarrow\infty}X^{\alpha}\LL(X;\SSS) > 0
        \et
        \liminf_{X\rightarrow\infty}X^{\beta}\LL(X;\SSS) < \infty,
    \end{equation}
thus $\alpha = \hlambda(\bxi;\SSS)$ and $\beta=\lambda(\bxi;\SSS)$.
\end{theorem}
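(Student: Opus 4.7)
The plan is to pass to the sequence of best approximations of $\bxi$ in $\SSS$ and exploit the $\bQ$-linear independence of the coordinates of $\bxi$ via a determinant estimate. Let $(\ux_i)_{i\geq 1}$ be the sequence of points of $\SSS$ at which $\LL(\cdot;\SSS)$ jumps, and set $X_i=\norm{\ux_i}$ and $L_i=L_{\bxi}(\ux_i)$. Then $X_i$ strictly increases, $L_i$ strictly decreases to $0$, and $\LL(X;\SSS)=L_i$ on $[X_i,X_{i+1})$. Evaluating \eqref{eq: encadrement initial version exposants thm1.2} at $X=X_i$ and at $X$ just below $X_{i+1}$ will give the two-sided estimate
\[
bX_i^{-\beta}\leq L_i \leq aX_{i+1}^{-\alpha}\qquad(i\text{ large}),
\]
from which I extract the recurrences $X_{i+1}\leq (a/b)^{1/\alpha}X_i^{\beta/\alpha}$ and $L_{i+1}\leq ab^{-\alpha/\beta}L_i^{\alpha/\beta}$.

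Each $\ux_i$ decomposes as $\ux_i=(x_{i,0}/\xi_0)\bxi+P_i$, with $P_i$ lying in the hyperplane $\{y_0=0\}$ and $\norm{P_i}\leq \sqrt{n}\,L_i/|\xi_0|\to 0$, while $|x_{i,0}|\to\infty$. Since the coordinates of $\bxi$ are $\bQ$-linearly independent, no proper subspace of $\bR^{n+1}$ can contain infinitely many $\ux_i$, so I can inductively choose indices $i_1<\cdots<i_{n+1}$ for which $\ux_{i_1},\ldots,\ux_{i_{n+1}}$ are linearly independent. Expanding $\det(\ux_{i_1},\ldots,\ux_{i_{n+1}})$ by multilinearity in this decomposition kills every term with two or more $\bxi$ factors and the term with none (the $P_{i_k}$ sit in at most $n$ dimensions); bounding what remains by Hadamard's inequality yields
\[
 1 \leq \bigl|\det(\ux_{i_1},\ldots,\ux_{i_{n+1}})\bigr| \leq C_1 \sum_{k=1}^{n+1}X_{i_k}\prod_{\ell\neq k}L_{i_\ell}
\]
for some $C_1=C_1(\bxi,n)>0$.

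The core of the proof is combining the determinant inequality with these recurrences to derive \eqref{eq: thm 2 egalite ratio exposants}. Iterating the decay recurrence and $L_{i_1}\leq aX_{i_1}^{-\alpha}$ gives $L_{i_k}\leq C X_{i_1}^{-\alpha^k/\beta^{k-1}}$, while iterating the growth recurrence controls $X_{i_{n+1}}$ by a power of $X_{i_1}$. Substituting both into the determinant bound and letting $i_1\to\infty$ should produce the exponent inequality. I expect this combinatorial step to be the main obstacle: a naive estimation that retains only the term with the largest $X_{i_k}/L_{i_k}$ produces a strictly weaker inequality than \eqref{eq: thm 2 egalite ratio exposants}, so either a carefully chosen tuple $(i_1,\ldots,i_{n+1})$ (for instance, picking $i_k$ as the first index such that $\ux_{i_k}$ is linearly independent of $\ux_{i_1},\ldots,\ux_{i_{k-1}}$ and exploiting that $\ux_{i_k-1}$ then lies in the span of the previous ones) or the simultaneous use of several determinant inequalities for nested tuples, in the spirit of the maximization scheme of \cite{marnat2018optimal}, appears to be required to extract the sharp bound.

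For the last assertion of the theorem, the plan is to make the entire previous analysis quantitative. Any slackness in the recurrences or in the determinant bound would strictly improve the final exponent inequality; hence equality in \eqref{eq: thm 2 egalite ratio exposants} forces the two-sided hypothesis estimate to be saturated infinitely often. This will provide subsequences of indices along which $L_i\asymp X_{i+1}^{-\alpha}$ and $L_i\asymp X_i^{-\beta}$, from which the two limits in \eqref{eq: thm 1.2 estimation des constantes} follow, and the identifications $\alpha=\hlambda(\bxi;\SSS)$ and $\beta=\lambda(\bxi;\SSS)$ then come directly from the definitions \eqref{eq: def expo} adapted to $\SSS$.
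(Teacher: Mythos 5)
Your setup (minimal points, the two-sided recurrences $bX_i^{-\beta}\leq L_i\leq aX_{i+1}^{-\alpha}$, and the decomposition $\ux_i=(x_{i,0}/\xi_0)\bxi+P_i$ leading to $1\leq C_1\sum_k X_{i_k}\prod_{\ell\neq k}L_{i_\ell}$ for $n+1$ independent minimal points) is sound, but the proof has a genuine gap at exactly the point you flag yourself: a single Hadamard-type determinant inequality, however the tuple is chosen, does not yield the sharp bound \eqref{eq: thm 2 egalite ratio exposants}. Keeping one dominant term (or even all $n+1$ terms of one determinant) produces a strictly weaker exponent inequality for $n\geq 2$, and the ``maximization scheme'' you allude to is precisely the missing content, not a routine refinement. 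The paper's mechanism is different and essential: for indices $i_0<i_1<\cdots<i_{n-1}$ chosen so that $\dim\langle\ux_{i_0},\dots,\ux_{i_t}\rangle_{\bR}=t+1$, one builds two nested families of rational subspaces $U^k_t$ and $V^{k+1}_t$ spanned by consecutive minimal points, applies Schmidt's height inequality $H(A+B)H(A\cap B)\ll H(A)H(B)$ to each pair, and telescopes these by descending induction on $k$ (Lemma~\ref{lem: lemme Nguyen hauteurs}) to reach the product inequality $X_{i_1}\cdots X_{i_{n-1}}\leq c\,L_{i_0}X_{i_0+1}\cdots L_{i_{n-1}}X_{i_{n-1}+1}$ of Theorem~\ref{thm: lemme Nguyen}. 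That inequality, combined with a monotonicity argument for the auxiliary functions $\Phi_k(X)=X\phi(\theta^k(X))\cdots\phi(X)$ (Lemmas~\ref{lem: lemme intermediaire} and~\ref{lem: lemme intermediaire 2}), is what produces \eqref{eq: thm 2 egalite ratio exposants}. Without this cascade of height inequalities (or an equivalent device), your argument does not close.

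The equality case is likewise not established by your sketch. ``Any slackness would strictly improve the exponent inequality, hence the hypothesis is saturated infinitely often'' presupposes a sharp inequality with an explicit, $a$- and $b$-dependent constant, which you do not have. In the paper this step is quantitative and short: when $\ee=0$ the function $\Phi_{n-1}$ is the constant $a^n(a/b)^{\delta}$, and the bound $\Phi_{n-1}\geq c(\bxi)$ forces $a^n(a/b)^{\delta}\geq c$ for \emph{every} admissible pair $(a,b)$ in \eqref{eq: encadrement initial version exposants thm1.2}; if the $\limsup$ in \eqref{eq: thm 1.2 estimation des constantes} were $0$ one could take $a$ arbitrarily small (with $b$ fixed), and if the $\liminf$ were $\infty$ one could take $b$ arbitrarily large (with $a$ fixed), contradicting that bound. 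You would need to supply both the sharp constant and this deduction to complete your argument.
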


Assuming that $\hlambda(\bxi;\SSS) > 0$, the first part of
Theorem~\ref{thm: transfert exposants avec constante} implies that
\begin{align}
    \label{eq: Marnat-Moshchevitin relatif a S}
    \hlambda(\bxi;\SSS) + \frac{\hlambda(\bxi;\SSS)^2}{\lambda(\bxi;\SSS)}+\dots + \frac{\hlambda(\bxi;\SSS)^{n}}{\lambda(\bxi;\SSS)^{n-1}} \leq 1,
\end{align}
which gives Theorem~\ref{thm: Marnat-Moshchevitin simultanee} by
choosing $\SSS = \bZ^{n+1}$. Indeed, if $\lambda(\bxi;\SSS)<\infty$, then \eqref{eq: encadrement initial version exposants thm1.2} holds for $X$ large enough with $a=b=1$ and any choice of $\alpha, \beta$
with $0<\alpha < \hlambda(\bxi;\SSS)$ and $\beta>\lambda(\bxi;\SSS)$. Inequality \eqref{eq: thm 2 egalite ratio exposants} then gives \eqref{eq: Marnat-Moshchevitin relatif a S} by letting $\alpha$ tend to $\hlambda(\bxi;\SSS)$ and $\beta$ to $\lambda(\bxi;\SSS)$. Otherwise, we have $\lambda(\bxi;S) = \infty$ and \eqref{eq: Marnat-Moshchevitin relatif a S} holds trivially since $\hlambda(\bxi;\SSS)\leq 1$. Another application of Theorem~\ref{thm: transfert exposants avec constante} is given in Section~\ref{Section: applications}.

Rather than taking monomials to control the function $\LL$, we now turn to a more general setting in the spirit of \cite{jarnik1938khintchineschen}. The following transference principle is our main result.  As we will see, it implies Theorem~\ref{thm: transfert exposants avec constante}.

\begin{theorem}
\label{thm: thm transfert}
Let $\bxi\in\bR^{n+1}$ with $\bQ$-linearly independent coordinates and
let $\SSS\subseteq\bZ^{n+1}$. Suppose that there exist an unbounded subinterval $I$ of $(0,\infty)$, a point $A\in I$ and continuous functions $\phi,\psi,\theta:I\rightarrow (0,\infty)$ with the following properties.
\begin{enumerate}[label=\rm(\roman*)]
    \item \label{condition 4 thm ppal}  We have $\psi(X)\leq\LL(X;S) \leq \phi(X)$ for each $X\geq A$.
    \smallskip
    \item \label{condition 1 thm ppal}  The functions $\phi$ and $\psi$
      are strictly decreasing, whereas $\theta$ is increasing with
    \[
        \lim_{X\rightarrow\infty} \phi(X) = \lim_{X\rightarrow\infty} \psi(X)= 0
        \quad \textrm{and} \quad
        \lim_{X\rightarrow\infty} \theta(X) = \infty.
    \]
    \item \label{condition 1 bis thm ppal} For each $k=1,\dots,n-1$, the $k$-th iterate $\theta^k$ of $\theta$ maps $[A,\infty)$ to $I$.
    \smallskip
    \item \label{condition 2 thm ppal}  We have $\phi(X) = \psi(\theta(X))$ for each $X\geq A$.
    \smallskip
    \item \label{condition 3 thm ppal}  The functions $\phi_0,\dots,\phi_{n-1}$, $\Phi_0,\dots,\Phi_{n-1}$ defined on $[A,\infty)$ by
    \begin{align}
        \label{eq: def phi_0}
        \phi_0(X) &= \phi(X) \\
        \label{eq: def phi_k et Phi_k}
        \phi_k(X) &= \phi(\theta^{k}(X))\cdots \phi(\theta(X))\phi(X)\quad(1\leq k < n), \\
        \label{eq: def Phi_k}
        \Phi_k(X) &= X\phi_k(X)\quad(0\leq k\leq n-1).
    \end{align}
    have the property that $\Phi_0$ is monotonically increasing and that $\Phi_1,\dots,\Phi_{n-1}$ are monotonic (either decreasing or increasing).
\end{enumerate}
  Then $\Phi_0,\dots,\Phi_{n-2}$ are monotonically increasing and we have
\begin{equation}
    \label{thm: eq ppale thm transfert}
    \Phi_{n-1} \geq c,
\end{equation}
for some constant $c>0$ depending only on $\bxi$.
\end{theorem}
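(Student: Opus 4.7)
My plan is to build a ladder of best approximations in $\SSS$ at the scales $X_k := \theta^k(X)$ governed by hypotheses~(i)--(iv), exploit the $\bQ$-linear independence of the coordinates of $\bxi$ to derive a multilinear determinant inequality, and then use the monotonicity hypothesis~(v) to extract the conclusion.

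I start by fixing $X \in I$ large enough; by~(iii) the iterates $X_0, X_1, \ldots, X_{n-1}$ all lie in $I$. Condition~(i) provides, for each such $k$, a nonzero $\ux_k \in \SSS$ with $\norm{\ux_k} \leq X_k$ and $L_\bxi(\ux_k) \leq \phi(X_k)$. The identity $\phi(X_k) = \psi(X_{k+1})$ from~(iv), combined with the lower bound $\psi \leq \LL(\cdot;\SSS)$ of~(i), yields the gap property: every nonzero $\uy \in \SSS$ with $\norm{\uy} \leq X_{k+1}$ satisfies $L_\bxi(\uy) \geq \phi(X_k)$. An inductive refinement, using the $\bQ$-linear independence of $\xi_0, \ldots, \xi_n$, should allow me to arrange that $\ux_0, \ldots, \ux_{n-1}$ are $\bQ$-linearly independent.

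The key tool will be a multilinear determinant estimate. I set $\uv := \ux_0 \wedge \cdots \wedge \ux_{n-1}$, a nonzero vector of $\bZ^{n+1}$ under Hodge identification. The column reduction $C_j \mapsto C_j - (\xi_j/\xi_0) C_0$ applied to the matrix $(\bxi, \ux_0, \ldots, \ux_{n-1})$ gives, via Hadamard's inequality,
\[
|\bxi \cdot \uv| = |\det(\bxi, \ux_0, \ldots, \ux_{n-1})| \leq c_1(\bxi) \prod_{k=0}^{n-1} L_\bxi(\ux_k) \leq c_1\,\phi_{n-1}(X),
\]
while the same reduction applied to matrices of the shape $(\xi_0 e_j - \xi_j e_0, \ux_0, \ldots, \ux_{n-1})$ bounds $L_\bxi(\uv)$ by a sum of products of $\norm{\ux_k}$ and $L_\bxi(\ux_j)$. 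Expanding the identity
\[
v_0(\bxi \cdot \uv) = \xi_0 \norm{\uv}^2 - \sum_{k=1}^n v_k (\xi_0 v_k - \xi_k v_0)
\]
and applying Cauchy--Schwarz then yields the elementary estimate $\norm{\uv} \leq c_2(\bxi)\bigl(|\bxi \cdot \uv| + L_\bxi(\uv)\bigr)$, which in combination with the two previous bounds provides a multilinear inequality involving the quantities $X_k$ and $\phi(X_j)$.

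To finish, I would combine these determinant estimates with a suitable lower bound on $\norm{\uv}$ derived from the best-approximation structure of the ladder, and then translate the resulting inequality into a statement about the functions $\Phi_j$. A case analysis driven by the monotonicity of each $\Phi_j$ (guaranteed by~(v)) should show that if any of $\Phi_0, \ldots, \Phi_{n-2}$ were monotonically decreasing, the resulting inequality would eventually fail as $X \to \infty$; hence all these $\Phi_j$'s are monotonically increasing, and substituting their uniform positive lower bounds back in yields $\Phi_{n-1}(X) \geq c$ for a constant $c = c(\bxi) > 0$. The hard part will be this last step: organizing the case analysis among the $n$ monotonicities and extracting a lower bound on $\norm{\uv}$ sharp enough to close the argument, since the naive bound $\norm{\uv} \geq 1$ only delivers a version of the inequality weaker than what is needed.
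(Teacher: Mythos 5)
Your outline stops exactly where the real difficulty begins, and the tool you propose for the multilinear step cannot close that gap. The single wedge-product/determinant estimate you describe --- bounding $|\det(\bxi,\ux_0,\dots,\ux_{n-1})|$ and $L_{\bxi}(\uv)$ by products of the $\norm{\ux_k}$ and $L_{\bxi}(\ux_j)$, then comparing with a lower bound for $\norm{\uv}$ --- is the classical Jarn\'{\i}k/Khintchine-type transference argument. For $n\ge 2$ it yields an inequality of the shape $1\ll \sum_j \norm{\ux_j}\prod_{k\ne j}L_{\bxi}(\ux_k)$, which (as you yourself note at the end) is strictly weaker than the bound $\Phi_{n-1}\ge c$, i.e.\ than $X\prod_{k}\phi(\theta^k(X))\gg 1$. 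This is not a matter of ``organizing the case analysis'' more cleverly: the optimal inequality \eqref{eq: Marnat-Moshchevitin approx simultanee reecrite} is known not to follow from a single determinant of $n+1$ approximations, which is precisely why the cases $n=2,3$ historically required the finer arguments of Laurent and Moshchevitin. Moreover, your plan to take points at the prescribed scales $X_k=\theta^k(X)$ and then ``inductively refine'' them into a linearly independent family loses all quantitative control: best approximations can remain in a proper rational subspace for long stretches, and the cost of waiting for the span to grow is exactly the quantity one must track.

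The paper's proof supplies the missing engine in two pieces. First, Theorem~\ref{thm: lemme Nguyen}: starting from a sequence of minimal points, one selects indices $i_0<i_1<\dots<i_{n-1}$ at which the dimension of $\langle \ux_{i_0},\dots,\ux_{i_t}\rangle_{\bR}$ jumps, builds a doubly indexed family of rational subspaces $U^k_t\subseteq V^{k+1}_t$ from consecutive minimal points, and applies Schmidt's height inequality $H(A+B)\,H(A\cap B)\ll H(A)\,H(B)$ iteratively (Lemma~\ref{lem: lemme Nguyen hauteurs}) to obtain the multiplicative inequality $X_{i_1}\cdots X_{i_{n-1}}\ll L_{i_0}X_{i_0+1}\cdots L_{i_{n-1}}X_{i_{n-1}+1}$. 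This is the genuinely multilinear input that replaces your single determinant. Second, a telescoping lemma (Lemma~\ref{lem: lemme intermediaire}) converts the right-hand side into $X_{i_1}\cdots X_{i_{n-1}}\Phi_{n-1}(X_{i_{n-1}+1})$ using the minimal-point inequalities $L_i\le\phi(X_{i+1})$, $X_i\ge\theta(X_{i+1})$ and the monotonicity hypotheses on the $\Phi_k$; the claim that $\Phi_0,\dots,\Phi_{n-2}$ increase is then obtained by contradiction from the same two inequalities. Without an analogue of Theorem~\ref{thm: lemme Nguyen} your argument cannot reach the stated conclusion.
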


Note that since $\phi$ is decreasing and $\theta$ is increasing, each function $\phi_k$ is decreasing and tends to $0$. The most natural choice for the functions
$\phi,\phi,\theta$ is to take monomials in $X$ as below. In doing so, we now prove
that Theorem~\ref{thm: thm transfert} implies
Theorem~\ref{thm: transfert exposants avec constante}. With the notation of
Theorem~\ref{thm: transfert exposants avec constante}, the functions
$\psi,\phi,\theta$ defined for each $X>0$ by
\begin{equation}
    \label{eq: def psi, phi, theta for power functions}
    (\psi,\phi,\theta)(X) = \Big(bX^{-\beta},aX^{-\alpha},\Big(\frac{a}{b}\Big)^{-1/\beta}X^{\alpha/\beta} \Big)
\end{equation}
satisfy $\phi = \psi\circ\theta$. Moreover since $\alpha\leq \hlambda(\bxi,\SSS)\leq 1$ by \eqref{eq: relation expo avec S et sans S}, the product $\Phi_0(X) = X\phi(X) = aX^{1-\alpha}$ is monotonically increasing for $X>0$. For each $k$ with $0\leq k\leq n-1$ and $X>0$ we have
\begin{equation}
\label{eq: eq inter triplet compatibles functions inter}
    \phi(\theta^k(X))
    = a\Big(\frac{a}{b}\Big)^{(\alpha/\beta) +\dots + (\alpha/\beta)^k}X^{-\alpha^{k+1}/\beta^k},
\end{equation}
and so the functions $\Phi_1,\dots,\Phi_{n-1}$ defined by \eqref{eq: def Phi_k} are monotonic. Thus $\phi,\psi,\theta$ satisfy Conditions \ref{condition 1 thm ppal} to \ref{condition 3 thm ppal} of Theorem~\ref{thm: thm transfert}, and Condition~\ref{condition 4 thm ppal} amounts to Condition \eqref{eq: encadrement initial version exposants thm1.2} of Theorem~\ref{thm: transfert exposants avec constante}. Furthermore note that there is a positive number $\delta>0$ (which is a polynomial in $\alpha/\beta$) such that for each $X>0$ we have
\begin{equation}
    \label{eq: def Phi_n-1 generale pour corollaires}
    \Phi_{n-1}(X) = a^n\Big(\frac{a}{b}\Big)^\delta X^{\ee},\quad\textrm{where } \ee = 1-\Big(\alpha+\frac{\alpha^2}{\beta}+\dots + \frac{\alpha^n}{\beta^{n-1}}\Big).
\end{equation}
By \eqref{thm: eq ppale thm transfert} we then get \eqref{eq: thm 2 egalite ratio exposants}, namely $\ee\geq 0$. This in turn implies \eqref{eq: Marnat-Moshchevitin relatif a S} as explained after Theorem~\ref{thm: transfert exposants avec constante}. Suppose now that $\ee = 0$. Since $\alpha\leq \hlambda(\bxi;\SSS)$ and $\beta\geq\lambda(\bxi;\SSS)$, we thus have
\begin{align*}
    1 = \alpha + \frac{\alpha^2}{\beta}+\dots+\frac{\alpha^n}{\beta^{n-1}} \leq \hlambda(\bxi;\SSS) + \frac{\hlambda(\bxi;\SSS)^2}{\lambda(\bxi;\SSS)}+\dots + \frac{\hlambda(\bxi;\SSS)^{n}}{\lambda(\bxi;\SSS)^{n-1}} \leq 1,
  \end{align*}
and we conclude that $\alpha = \hlambda(\bxi;\SSS)$ and
$\beta=\lambda(\bxi;\SSS)$. Moreover, by using once again
\eqref{thm: eq ppale thm transfert},
\eqref{eq: def Phi_n-1 generale pour corollaires} and the
hypothesis that $\ee=0$, we obtain
\[
    a^n\Big(\frac{a}{b}\Big)^\delta \geq c,
\]
where $c$ is given by \eqref{thm: eq ppale thm transfert}. It means that in \eqref{eq: encadrement initial version exposants thm1.2}, we cannot replace $a$ by a constant strictly smaller that $a'=(cb^\delta)^{1/(n+\delta)}$ and $b$ by a constant strictly larger than $b'=(a^{n+\delta}/c)^{1/\delta}$. This proves \eqref{eq: thm 1.2 estimation des constantes} with the superior limit $\geq a'$ and the inferior limit $\leq b'$.

\begin{remark}
Clearly, Conditions \ref{condition 1 thm ppal} to \ref{condition 3 thm ppal}
apply to many more general classes of functions $\phi$ and $\psi$. For example,
we can take $\phi(X) = aX^{-\alpha}\log^{\sigma}(X)$ and
$\psi(X) = bX^{-\beta}\log^{\rho}(X)$ for suitable positive numbers
$a,b,\alpha,\beta$ and real numbers $\sigma,\rho$.
\end{remark}

The next result complements Theorem~\ref{thm: transfert exposants avec constante}.

\begin{theorem}
\label{thm: comportement points minimaux cas extremal}
Let $n>1$, let $\bxi$ be a point of \/$\bR^{n+1}$ whose coordinates are linearly independent
over $\bQ$ and let $\SSS\subseteq\bZ^{n+1}$. Suppose that there are positive
real numbers $a,b,\alpha,\beta$ such that
\begin{equation}
\label{eq: encadrement initial version exposants}
    bX^{-\beta} \leq \LL(X ; \SSS) \leq aX^{-\alpha}
\end{equation}
for each sufficiently large real number $X$. Then we have $\alpha\leq \beta$ and
\begin{equation}
    \label{eq: egalite expo dans thm points min}
    \ee:= 1-\Big(\alpha + \frac{\alpha^2}{\beta}+\dots + \frac{\alpha^{n}}{\beta^{n-1}}\Big) \geq 0.
\end{equation}
Moreover, there exists a constant $C>0$ which depends only
on $\bxi,a,b,\alpha,\beta$ with the following property. If
\begin{equation}
\label{eq: preuve thm 2.1 def ee_0}
    \ee \leq \frac{1}{4n}\Big(\frac{\alpha}{\beta}\Big)^n
             \min\{\alpha,\beta-\alpha\},
\end{equation}
then there is an unbounded sequence $(\uy_i)_{i\geq 0}$ of non-zero integer
points in $\SSS$ which for each $i\geq 0$ satisfies the following conditions:
\begin{enumerate}[label=\rm(\roman*)]
    \item \label{condition i thm suite}
    $ \big|\alpha\log \norm{\uy_{i+1}}-\beta\log\norm{\uy_i}\big|
      \leq C +4\ee(\beta/\alpha)^n\log \norm{\uy_{i+1}}$;
    \smallskip
    \item \label{condition ii thm suite}
    $ \big|\log L_{\bxi}(\uy_i) + \beta\log \norm{\uy_{i}}\big|
      \leq C+4\ee(\beta/\alpha)^2\log \norm{\uy_i}$;
    \smallskip
    \item \label{condition iii thm suite}
    $\det(\uy_i,\dots,\uy_{i+n}) \neq 0$;
    \smallskip
    \item \label{condition iv thm suite}
    there exists no $\ux\in \SSS\setminus\{0\}$ with  $\norm{\ux} \le \norm{\uy_i}$
    and  $L_{\bxi}(\ux) < L_{\bxi}(\uy_i)$.
\end{enumerate}
\end{theorem}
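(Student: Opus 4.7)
The first two assertions are immediate consequences of earlier results: the inequality $\alpha\leq\beta$ follows from \eqref{eq: encadrement initial version exposants} by letting $X\to\infty$, and $\ee\geq 0$ is the content of Theorem~\ref{thm: transfert exposants avec constante}. The real work is to construct the sequence $(\uy_i)_{i\geq 0}$ satisfying \ref{condition i thm suite}--\ref{condition iv thm suite} when $\ee$ meets the smallness assumption~\eqref{eq: preuve thm 2.1 def ee_0}. I would take $(\uy_i)_{i\geq 0}$ to be a suitable subsequence of the minimal points of $\SSS$ starting from some point of sufficiently large norm, making condition~\ref{condition iv thm suite} automatic.

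For consecutive minimal points $\uy_i, \uy_{i+1}$, the identity $\LL(X;\SSS)=L_{\bxi}(\uy_i)$ for $X\in[\norm{\uy_i},\norm{\uy_{i+1}})$ combined with~\eqref{eq: encadrement initial version exposants} evaluated at both endpoints yields the bracket
\[
    b\,\norm{\uy_i}^{-\beta}\leq L_{\bxi}(\uy_i)\leq a\,\norm{\uy_{i+1}}^{-\alpha}.
\]
This produces the ``easy'' halves of~\ref{condition i thm suite} and~\ref{condition ii thm suite} (the lower bound on $L_{\bxi}(\uy_i)$ and the one-sided bound $\alpha\log\norm{\uy_{i+1}}-\beta\log\norm{\uy_i}\leq\log(a/b)$) with constant $C=\log(a/b)$, requiring no assumption on $\ee$.

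The converse inequalities rely on the near-extremal hypothesis via Theorem~\ref{thm: thm transfert}. I plan to apply that theorem locally at each $\uy_i$: starting the iteration at $X_0=\norm{\uy_i}$, the subsequent minimal points $\uy_{i+1},\dots,\uy_{i+n-1}$ should realise the chain produced in the proof of the transference theorem, and the lower bound~\eqref{thm: eq ppale thm transfert} combined with the explicit form~\eqref{eq: def Phi_n-1 generale pour corollaires} yields a multiplicative slack of order $\norm{\uy_i}^\ee$. When $\ee$ is small, this total slack is at most $O(\ee\log\norm{\uy_i})$, and distributing it among the $n$ consecutive ratios $\norm{\uy_{i+j+1}}/\norm{\uy_{i+j}}^{\beta/\alpha}$ and among the ratios $L_{\bxi}(\uy_{i+j})/\norm{\uy_{i+j}}^{-\beta}$ — which enter the chain as a geometric progression of ratio $\beta/\alpha$ — yields exactly the error factors $4\ee(\beta/\alpha)^n$ and $4\ee(\beta/\alpha)^2$ stated in~\ref{condition i thm suite} and~\ref{condition ii thm suite}.

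Condition~\ref{condition iii thm suite} is the main obstacle. The idea is to pass to a subsequence, skipping forward among the minimal points whenever $\uy_i,\dots,\uy_{i+n}$ fail to be linearly independent. The task reduces to bounding how far one must skip, under the assumption~\eqref{eq: preuve thm 2.1 def ee_0}. This should follow from a quantitative sharpening of the determinant argument underlying Theorem~\ref{thm: thm transfert}: the lower bound $|\det(\uy_i,\dots,\uy_{i+n})|\geq 1$, valid in the linearly independent case, is what produces~\eqref{thm: eq ppale thm transfert}. Replacing $\uy_{i+n}$ by the next minimal point outside $V=\mathrm{span}(\uy_i,\dots,\uy_{i+n-1})$ costs an extra multiplicative factor in the chain inequality, and I expect the smallness threshold in~\eqref{eq: preuve thm 2.1 def ee_0} to be precisely calibrated to rule out an arbitrarily large such loss. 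Extracting the explicit constants $\tfrac{1}{4n}$, $(\alpha/\beta)^n$ and $\min\{\alpha,\beta-\alpha\}$ from this bookkeeping — rather than a merely qualitative ``$\ee$ small enough'' statement — is where the delicate work of the proof will concentrate.
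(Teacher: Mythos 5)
Your overall shape matches the paper's: take $(\uy_i)$ to be a subsequence of the minimal points so that \ref{condition iv thm suite} is automatic, and read \ref{condition i thm suite}--\ref{condition ii thm suite} off the fact that near-equality in the chain of inequalities behind Theorem~\ref{thm: thm transfert} forces each individual link to be an equality up to a factor $\GrO(\norm{\uy}^{\ee})$. But two of your steps have genuine gaps.

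The crux you are missing is a dichotomy, and without it both your treatment of \ref{condition iii thm suite} and your claim that consecutive terms of the subsequence ``realise the chain'' are unsupported. Writing $i_0<i_1<\cdots<i_{n-1}$ for the indices produced by Theorem~\ref{thm: lemme Nguyen}, the near-equality estimates show that each $i\in\{i_1,\dots,i_{n-1}\}$ satisfies a ``big jump'' $\log X_{i+1}\approx(\beta/\alpha)\log X_i$ while every intermediate index satisfies a ``small jump'' $\log X_{i+1}\approx\log X_i$, each up to an error $C+2(\ee/\alpha)(\beta/\alpha)^n\log X_i$; the hypothesis \eqref{eq: preuve thm 2.1 def ee_0} --- note the factor $\min\{\alpha,\beta-\alpha\}$, which forces $\alpha<\beta$ for this to be nonvacuous --- is calibrated precisely so that the two regimes are disjoint for large $i$. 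Hence the set $I$ of big-jump indices is intrinsic (independent of the chosen starting point $i_0$), any $n$ consecutive elements of $I$ automatically form a chain in the sense of Theorem~\ref{thm: lemme Nguyen}, and \ref{condition iii thm suite} follows because such a chain together with the next minimal point spans $\bR^{n+1}$. Your alternative --- skip forward whenever $\uy_i,\dots,\uy_{i+n}$ are dependent and ``bound how far one must skip'' --- is circular: without the dichotomy there is no a priori bound on the skip, and an uncontrolled skip destroys the two-sided estimate \ref{condition i thm suite}. You also do not address the degenerate case $\alpha=\beta$ (then $\ee=0$, $\alpha=\beta=1/n$, all relevant norms are comparable, and the paper switches to a separate greedy construction, which is the one place where something like your skipping idea is actually used).

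Second, your assertion that \eqref{thm: eq ppale thm transfert} is produced by the bound $|\det(\uy_i,\dots,\uy_{i+n})|\geq 1$ misidentifies the engine. The naive determinant estimate yields only $1\ll X_{i_{n-1}+1}L_{i_0}\cdots L_{i_{n-1}}$, which is strictly weaker than the inequality $X_{i_1}\cdots X_{i_{n-1}}\leq c\,L_{i_0}X_{i_0+1}\cdots L_{i_{n-1}}X_{i_{n-1}+1}$ of Theorem~\ref{thm: lemme Nguyen} (proved via Schmidt's height inequality applied to the family of subspaces $U^k_t$, $V^k_t$), and it does not give the sharp exponent relation for $n\geq 3$. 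All the near-equality estimates feeding \ref{condition i thm suite} and \ref{condition ii thm suite}, as well as the conversion of the global slack $\ee\log Z_{n-1}$ into local errors $\GrO(\ee\log\norm{\uy_i})$ (which itself needs a descending induction and the smallness of $\ee$), must be extracted from that chain, not from a determinant.
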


For a point $\bxi$ of the form $\bxi = (1,\xi,\xi^2)$ with $\xi\in\bR$
not algebraic of degree at most $2$ over $\bQ$, satisfying
\eqref{eq: encadrement initial version exposants} with
$S=\bZ^3$, $\beta=1$  and $\ee=0$, we recover a construction of the
third author \cite[Theorem~5.1]{roy2004approximation} dealing with
extremal numbers. For a point $\bxi=(1,\theta,\dots,\theta^n,\xi)$ with
$\theta\in\bR$ algebraic of degree $n$ over $\bQ$ and
$\xi\in\bR\setminus\bQ(\theta)$, satisfying
\eqref{eq: encadrement initial version exposants} with
$S=\bZ^{n+1}$, $\beta = 1/(n-1)$ and $\ee=0$, the result is due to
the first author \cite[Theorem~2.4.3]{N2014}.

\begin{remark}
As the proof will show, the upper bound for $\ee$ in
\eqref{eq: preuve thm 2.1 def ee_0} and the coefficients of $\ee$
in \ref{condition i thm suite} and \ref{condition ii thm suite}
can easily be improved.
\end{remark}

This paper is organized as follows. In Section~\ref{section: notations}
we set the notation and we recall the definition of minimal points.
Section~\ref{Section:construction de Nguyen} is devoted to our main tool
which is a construction of subspaces of $\bR^{n+1}$ defined
over $\bQ$, together with inequalities relating their heights. The proofs
of Theorems~\ref{thm: thm transfert}
and~\ref{thm: comportement points minimaux cas extremal} follow in
Sections~\ref{section: preuve du theoreme principal}
and~\ref{section: preuves thm comportement points min}
respectively. Finally, some applications of our results are presented
in the last section.

\section{Notation, heights and minimal points}
\label{section: notations}

Given points $\uy_1,\uy_2,\dots$ of $\bR^{n+1}$, we denote by
$\langle \uy_{1}, \uy_2, \dots \rangle_{\bR}$  the vector subspace
of $\bR^{n+1}$ that they span. Recall that we endow $\bR^{n+1}$ with its
usual structure of inner product space and that we denote by  $\norm{\cdot}$
the corresponding Euclidean norm. In general, for any integer $k$
with $1\leq k \leq n+1$,
we endow the vector space $\bigwedge^k(\bR^{n+1})$ with the unique
structure of inner product space such that, for any orthonormal basis
$(e_1,\dots,e_n)$ of $\bR^{n+1}$, the products
$e_{i_1}\wedge\dots\wedge e_{i_k}$ ($i_1<\dots <i_k$) form an
orthonormal basis of $\bigwedge^k(\bR^{n+1})$. We still denote by
$\norm{\cdot}$ the associated norm.

If $W$ is a subspace of $\bR^{n+1}$ defined over $\bQ$, we define its \textsl{height}
$H(W)$ as the co-volume in $W$ of the lattice of integer points $W\cap \bZ^{n+1}$.
If $\dim W = k$, this is given by
\[
    H(W) = \norm{\ux_1\wedge\dots\wedge\ux_k}
\]
for any $\bZ$-basis $(\ux_1,\dots,\ux_k)$ of $W\cap \bZ^{n+1}$.
Schmidt proved the following result \cite[Chap. 1, Lemma 8A]{Schmidt1991}.

\begin{theorem}[Schmidt]
\label{thm: Schmidt inequality}
There exists a positive constant $c$ which depends only on $n$ such that
for any subspaces $A,B$ of \/ $\bR^{n+1}$ defined over $\bQ$, we have
\begin{equation*}
    \label{eq: Schmidt inequality}
    H(A+B)\, H(A\cap B) \leq c H(A)\, H(B).
\end{equation*}
\end{theorem}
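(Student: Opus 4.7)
The plan is to express each of the four heights as the norm of a decomposable multivector in an exterior power of $\bR^{n+1}$ and to deduce the inequality from a Hadamard/Fischer-type estimate. The key preparatory step is to produce $\bZ$-bases of $A\cap\bZ^{n+1}$ and $B\cap\bZ^{n+1}$ that extend a common basis of $(A\cap B)\cap\bZ^{n+1}$. This is possible because $A$ and $B$ are $\bQ$-rational, which makes $(A\cap B)\cap\bZ^{n+1}=(A\cap\bZ^{n+1})\cap B$ a primitive sublattice of $A\cap\bZ^{n+1}$ (and likewise of $B\cap\bZ^{n+1}$). Setting $a=\dim A$, $b=\dim B$ and $m=\dim(A\cap B)$, I pick such bases $\ue_1,\dots,\ue_m,\uv_1,\dots,\uv_{a-m}$ and $\ue_1,\dots,\ue_m,\uw_1,\dots,\uw_{b-m}$.

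With $X=\ue_1\wedge\cdots\wedge\ue_m$, $Y=\uv_1\wedge\cdots\wedge\uv_{a-m}$ and $Z=\uw_1\wedge\cdots\wedge\uw_{b-m}$, the definitions directly give $H(A\cap B)=\|X\|$, $H(A)=\|X\wedge Y\|$ and $H(B)=\|X\wedge Z\|$; moreover the concatenated integer vectors span $A+B$ over $\bR$ and thus generate a full-rank sublattice of $(A+B)\cap\bZ^{n+1}$, which forces $H(A+B)\leq\|X\wedge Y\wedge Z\|$. The theorem is therefore reduced to the multivector inequality
\[
    \|X\wedge Y\wedge Z\|\cdot\|X\|\leq c\,\|X\wedge Y\|\cdot\|X\wedge Z\|.
\]

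To prove this I would orthogonally split each $\uv_j$ and $\uw_k$ along $V=\langle\ue_1,\dots,\ue_m\rangle_{\bR}$ and $V^\perp$. The $V$-components are killed by wedging with $X$, so replacing $\uv_j$ and $\uw_k$ by their projections $\uv_j'$ and $\uw_k'$ onto $V^\perp$ leaves $X\wedge Y$, $X\wedge Z$ and $X\wedge Y\wedge Z$ unchanged. Writing $Y'=\uv_1'\wedge\cdots\wedge\uv_{a-m}'$ and $Z'=\uw_1'\wedge\cdots\wedge\uw_{b-m}'$, the fact that the span of $X$ lies in $V$ while $Y'$ and $Z'$ lie in $V^\perp$ yields the orthogonal factorizations $\|X\wedge Y\|=\|X\|\,\|Y'\|$, $\|X\wedge Z\|=\|X\|\,\|Z'\|$ and $\|X\wedge Y\wedge Z\|=\|X\|\,\|Y'\wedge Z'\|$. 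After cancelling $\|X\|^2$ on both sides, the desired inequality reduces to $\|Y'\wedge Z'\|\leq\|Y'\|\,\|Z'\|$. This is precisely Fischer's inequality: the squared left-hand side is the determinant of the Gram matrix of $(\uv_1',\dots,\uv_{a-m}',\uw_1',\dots,\uw_{b-m}')$, while the squared right-hand side is the product of the determinants of its two diagonal blocks, and a positive semidefinite block matrix has determinant dominated by the product of those block determinants. This yields the bound with the clean constant $c=1$.

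The main conceptual hurdle really lies in the very first step: producing integer bases compatible across three distinct lattices. Once these are in place, the rest is essentially exterior-algebra bookkeeping followed by a direct application of Fischer's inequality; in particular no subtle arithmetic input beyond the primitivity of $(A\cap B)\cap\bZ^{n+1}$ is required, and the dependence of $c$ on $n$ is actually trivial.
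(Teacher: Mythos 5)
Your proof is correct, and the comparison here is a little unusual: the paper does not prove this statement at all — it quotes it as Schmidt's Lemma 8A (Chapter 1 of the 1991 Lecture Notes) and uses it as a black box. Your argument is therefore a genuine addition rather than a variant. All the steps check out: the $\bQ$-rationality of $A\cap B$ does make $(A\cap B)\cap\bZ^{n+1}$ a primitive sublattice of both $A\cap\bZ^{n+1}$ and $B\cap\bZ^{n+1}$, so a $\bZ$-basis of it extends to $\bZ$-bases of both (the quotient lattices are torsion-free, hence free); the identifications $H(A\cap B)=\norm{X}$, $H(A)=\norm{X\wedge Y}$, $H(B)=\norm{X\wedge Z}$ and the bound $H(A+B)\leq\norm{X\wedge Y\wedge Z}$ via the index of the generated sublattice are all correct; and the orthogonal projection onto $(A\cap B)^{\perp}$ followed by Fischer's inequality for the Gram matrix finishes the reduction cleanly. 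One point worth stating explicitly is that the projected families $(\uv_j')$ and $(\uw_k')$, and even their union, remain linearly independent — a relation $\sum c_j\uv_j'+\sum d_k\uw_k'=0$ would place $\sum c_j\uv_j+\sum d_k\uw_k$ in $A\cap B$, contradicting the independence of the full basis of $A+B$ — so $Y'$ and $Z'$ are nonzero, the cancellation of $\norm{X}^2$ is legitimate, and the Gram matrix is in fact positive definite. Your argument yields the sharp constant $c=1$, which is stronger than what the paper cites and needs (any constant depending only on $n$ suffices for its application in Lemma 3.2); this sharp form goes back to Schmidt's 1967 work and to Struppeck and Vaaler.
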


If $f,g:I\rightarrow[0,+\infty)$ are two functions on a set $I$,
we write $f=\GrO(g)$ or $f\ll g$ or $g \gg f$ to mean that there
is a positive constant $c$ such that $f(x)\leq cg(x)$
for each $x\in I$. We write $f\asymp g$ when both $f\ll g$
and $g \ll f$.

When $\SSS\subseteq\bZ^{n+1}$ is such that $\lim_{X\rightarrow\infty}\LL(X;S) = 0$,
there exists a sequence $(\ux_i)_{i\geq 0}$ of non-zero points in
$\SSS$ satisfying
\begin{enumerate}[label=(\alph*)]
    \item $\norm{\ux_0} < \norm{\ux_1} < \norm{\ux_2 } < \dots$
    \item $L_{\bxi}(\ux_0) > L_{\bxi}(\ux_1) > L_{\bxi}(\ux_2) > \dots$
    \item For any $i\geq0$ and any non-zero point $\uz\in\SSS$
       with $\norm{\uz} < \norm{\ux_{i+1}}$, we have $L_{\bxi}(\uz) \geq L_{\bxi}(\ux_i)$.
\end{enumerate}
We say that such a sequence is a sequence of \textsl{minimal points} for $\bxi$ with
respect to $\SSS$.
Minimal points are a standard tool for studying rational approximation.
The usual choice is to take $\SSS=\bZ^{n+1}$.

\section{Families of vector subspaces}
\label{Section:construction de Nguyen}

The goal of this section is to prove the following key-theorem
established by the first author in her thesis \cite[\S 2.3]{N2014}
in the case where $\SSS=\bZ^{n+1}$. The proof in the general case
is the same. In this section $n$ is an integer $>1$.

\begin{theorem}
\label{thm: lemme Nguyen}
Let $\bxi\in\bR^{n+1}$ with $\bQ$-linearly independent coordinates.
Suppose that for some $\SSS\subseteq\bZ^{n+1}$ we have
$\lim_{X\rightarrow\infty}\LL(X;\SSS) = 0$. Let $(\ux_i)_{i\geq 0}$ be
a sequence of minimal points for $\bxi$ with respect to $\SSS$. For each $i\geq 0$, set
\[
     X_i = \norm{\ux_i}
     \quad\text{and}\quad
     L_i=\LL(X_i;\SSS) = L_{\bxi}(\ux_i).
\]
Fix also an index $i_0\geq 0$. Then for each $t=1,\dots,n-1$ there exists
a largest integer $i_t$ with $i_t\ge i_0$ such that
\begin{equation} \label{equation2.19}
		\dim \langle \ux_{i_0}, \ux_{i_0+1}, \ldots, \ux_{i_t} \rangle_{\bR} = t+1.
\end{equation}
For these indices $i_0<i_1<\cdots<i_{n-1}$, we have
\begin{equation*}
        X_{i_1}\cdots X_{i_{n-1}} \leq c L_{i_0}X_{i_0+1}\cdots L_{i_{n-1}}X_{i_{n-1}+1}
\end{equation*}
with a constant $c>0$ depending only on $\bxi$ and not on $i_0$.
\end{theorem}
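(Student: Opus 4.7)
The plan is to construct a nested chain of $\bQ$-rational subspaces $W_0\subsetneq W_1\subsetneq \cdots \subsetneq W_{n-1}$ of $\bR^{n+1}$ with $\dim W_t=t+1$, set $W_n=\bR^{n+1}$ for convenience, and then telescope a family of height inequalities
\[
    H(W_{t+1})\ll H(W_t)\cdot \frac{X_{i_t+1}L_{i_t}}{X_{i_t}}\qquad(0\leq t\leq n-1)
\]
starting from the trivial bound $H(W_0)\leq X_{i_0}$ and ending with $H(W_n)=1$. After multiplication, the initial $X_{i_0}$ will cancel against the $X_{i_0}$ in the denominator at $t=0$, leaving exactly the target inequality.

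First, I would verify that each $i_t$ exists. The crucial point is that the subsequence $(\ux_i)_{i\geq i_0}$ spans $\bR^{n+1}$ as a $\bQ$-vector space: if it lay in a proper $\bQ$-rational subspace, there would exist a nonzero rational linear form $f$ vanishing on every $\ux_i$ with $i\geq i_0$, and the $\bQ$-linear independence of the coordinates of $\bxi$ forces $f(\bxi)\neq 0$. Decomposing $\ux_i=(x_0^{(i)}/\xi_0)\bxi+\mathbf{w}_i$ with $\|\mathbf{w}_i\|\asymp L_i\to 0$ would then give $|f(\ux_i)|\geq (|x_0^{(i)}|/|\xi_0|)|f(\bxi)|-\GrO(L_i)>0$ for large $i$, a contradiction. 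By the maximality of each $i_t$, the minimal point $\ux_{i_t+1}$ lies outside $W_t$ for every $t=0,1,\dots,n-1$.

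Next I would prove the key height inequality. Picking a $\bZ$-basis $\ue_1,\dots,\ue_{t+1}$ of $W_t\cap\bZ^{n+1}$, the wedge $\ue_1\wedge\cdots\wedge\ue_{t+1}\wedge\ux_{i_t+1}$ is a nonzero integer multiple of the wedge of a $\bZ$-basis of $W_{t+1}\cap\bZ^{n+1}$, so its norm is bounded below by $H(W_{t+1})$; by the base-times-height formula for parallelepipeds the same norm equals $H(W_t)\cdot\dist(\ux_{i_t+1},W_t)$. It then remains to show $\dist(\ux_{i_t+1},W_t)\ll X_{i_t+1}L_{i_t}/X_{i_t}$. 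Since $\ux_{i_t}\in W_t$ and $|x_0^{(i_t)}|\asymp X_{i_t}$ (provided $L_{i_t}/X_{i_t}$ is small enough, the opposite regime being absorbed into a constant depending only on $\bxi$), one obtains $\dist(\bxi,W_t)\leq\|\bxi-(\xi_0/x_0^{(i_t)})\ux_{i_t}\|=\|\xi_0\ux_{i_t}-x_0^{(i_t)}\bxi\|/|x_0^{(i_t)}|\ll L_{i_t}/X_{i_t}$. Decomposing $\ux_{i_t+1}$ along $\bxi$ the same way and noting that the lower-order term $L_{i_t+1}$ is bounded by $L_{i_t}\leq X_{i_t+1}L_{i_t}/X_{i_t}$ (since $L_{i_t+1}\leq L_{i_t}$ and $X_{i_t+1}\geq X_{i_t}$) then yields the stated bound.

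Telescoping the $n$ inequalities together with $H(W_0)\leq X_{i_0}$ and $H(W_n)=1$ produces $1\ll X_{i_0}\prod_{t=0}^{n-1}X_{i_t+1}L_{i_t}/X_{i_t}$, and rearranging gives the theorem with an implicit constant depending only on $\bxi$ and $n$, and in particular not on $i_0$. The main obstacle I foresee is the distance estimate: the asymptotic $|x_0^{(i_t)}|\asymp X_{i_t}$ is only genuine once the minimal point $\ux_{i_t}$ is close to the line $\bR\bxi$, so care is needed to cover the finitely many opposite indices by an absolute constant depending only on $\|\bxi\|$ and $|\xi_0|$; the rest of the argument is a routine wedge-product bookkeeping, and as an alternative one may instead apply Schmidt's inequality (Theorem~\ref{thm: Schmidt inequality}) to $A=W_t$ and $B=\langle\ux_{i_t},\ux_{i_t+1}\rangle_{\bR}$, using the bound $\|\ux_{i_t}\wedge\ux_{i_t+1}\|\ll X_{i_t+1}L_{i_t}$ obtained from the same decomposition of each minimal point along $\bxi$.
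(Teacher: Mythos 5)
Your proposal is correct, and it takes a genuinely different route from the paper's. You work with the single increasing filtration $W_t=\langle \ux_{i_0},\dots,\ux_{i_t}\rangle_{\bR}$ and prove the one-step estimate
\[
  H(W_{t+1})\;\le\; H(W_t)\,\dist(\ux_{i_t+1},W_t)\;\ll\; H(W_t)\,\frac{X_{i_t+1}L_{i_t}}{X_{i_t}},
\]
where the distance bound comes from the proximity of $W_t\supseteq\bR\ux_{i_t}$ to the line $\bR\bxi$; telescoping from $H(W_0)\le X_{i_0}$ down to $H(W_n)=H(\bR^{n+1})=1$ then yields exactly the stated inequality. The paper instead introduces the two-parameter family $U^k_t\subseteq V^{k+1}_t$ (your $W_t$ occurs there as $U^{t+1}_t=V^{t+1}_{t-1}$) and runs a descending induction on the dimension $k$ using only Schmidt's height inequality, landing at $k=1$ where the heights are evaluated as $H(U^1_t)=X_{i_t}$ and $H(V^2_t)\ll X_{i_t+1}L_{i_t}$. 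Your route is shorter and injects the Diophantine input (closeness of minimal points to $\bR\bxi$) at every step of the chain, whereas the paper concentrates that input entirely in the two boundary computations and keeps the intermediate Lemma on heights purely geometric and reusable. Two points deserve care in a full write-up. First, your fallback of applying Schmidt's inequality to $A=W_t$ and $B=\langle\ux_{i_t},\ux_{i_t+1}\rangle_{\bR}$ leaves $H(A\cap B)=H(\bR\ux_{i_t})$ on the left-hand side, and for a general $\SSS\subseteq\bZ^{n+1}$ minimal points need not be primitive, so the lower bound $H(\bR\ux_{i_t})\gg X_{i_t}$ is not automatic; your main wedge-product argument sidesteps this entirely, since it only ever needs the upper bound $H(W_0)\le X_{i_0}$. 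Second, for the uniformity of $c$ in $i_0$: the indices $i$ where $|x_0^{(i)}|\asymp X_i$ may fail form a finite set (since $L_i\to 0$), hence contribute only finitely many tuples $(i_0,\dots,i_{n-1})$, each giving a fixed finite inequality absorbed by enlarging $c$ — which is the justification behind the absorption you invoke.
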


We first note that under the conditions of Theorem~\ref{thm: lemme Nguyen},
each subsequence $(\uy_i)_{i\in \bN}$ of $(\ux_i)_{i\in \bN}$ spans $\bR^{n+1}$.
Indeed, suppose by contradiction that a subsequence $(\uy_i)_{i\in \bN}$
spans a proper subspace $W$ of $\bR^{n+1}$. Since $(\uy_i)_{i \in \bN}$
converges to $\bxi$ projectively, we deduce that $\bxi \in W$, which
is impossible since $W$ is defined by linear equations with coefficients
in $\bQ$ while the coordinates of $\bxi$ are linearly independent over $\bQ$.
In particular, $(\ux_i)_{i\ge i_0}$ spans $\bR^{n+1}$ for the given index
$i_0$, and the existence of $i_1,\dots,i_{n-1}$ follows.

Clearly we have $i_0 < i_1 < \ldots < i_{n-1}$.  For simplicity,
we set
\[
        \VV[i,j] := \langle \ux_{i}, \ux_{i+1}, \ldots, \ux_{j} \rangle_{\bR}
\]
for each pair of integers $i,j$ with $0\leq i \leq j$. Then, for each
$t=0,1,\dots,n-1$, we have
\[
 \dim \VV[i_0,i_t]=t+1
  \quad\text{and}\quad
 \dim \VV[i_0,i_t+1]=t+2,
\]
thus $\ux_{i_t+1}\notin \VV[i_0,i_t]$.  By comparing  dimensions, we deduce that
\begin{equation}
  \label{eq: thmN: eq1}
  \bR^{n+1}= \VV[i_0,i_{n-1}+1]
    \quad\text{and}\quad
  \VV[i_0,i_{t-1}+1] = \VV[i_0,i_t] \quad (1\le t \le n-1).
\end{equation}

For each $(t,k) \in \bN^2$ with $1 \le k \le t+1\le n$, we define
\[
 V^{k+1}_t  = \VV[s(t,k), i_t +1]
 \quad \textrm{and} \quad
 U^k_t  = \VV[s(t,k), i_t],
\]
where $s(t,k)$ is the largest integer with $s(t,k)\le i_t$ such that
$\dim V^{k+1}_t = k+1$. By varying $k$ for fixed $t$, we obtain
a decreasing sequence
\[
  s(t,1) =i_t > s(t,2) >\ldots >s(t,t+1) \ge i_0.
\]
Thus $U^k_t$ is contained in $\VV[i_0,i_t]$ , so $\ux_{i_t+1}
\notin U^k_t$ and therefore $\dim U^k_t = k$. Moreover,
when $2\le k\le t+1$, we have $s(t,k)<s(t,k-1)\le i_t$, thus
\begin{equation}
  \label{eq10}
  V^{k+1}_t = U^k_t + V^k_t
\end{equation}
is the sum of two distinct $k$-dimensional subspaces. Since
$U^k_t$ and $V^k_t$ both contain $U^{k-1}_t$, we deduce that
\begin{equation}
  \label{equation2.11}	
  U^{k-1}_t = U^k_t \cap V^k_t,
\end{equation}
as both sides have dimension $k-1$.  Finally, we note that, for
$t=1,\dots,n-1$, the subspaces $U^{t+1}_t$ and $V^{t+1}_{t-1}$
are both contained in $\VV[i_0, i_{t}] = \VV[i_0, i_{t-1}+1]$.
Since all of these have dimension $t+1$, we conclude that
\begin{equation}
  \label{eq15}
  U^{t+1}_t = \VV[i_0, i_{t-1}+1] = V^{t+1}_{t-1}.
\end{equation}
The proof of Theorem \ref{thm: lemme Nguyen} relies on the
following lemma relating the heights of the above families of
subspaces.

\begin{lemma}
    \label{lem: lemme Nguyen hauteurs}
For each $k=1,\dots,n-1$, we have
\begin{equation}
  \label{equation7}
  H(U^k_k) H(U^k_{k+1}) \cdots H(U^k_{n-1})
   \ll H(V^{k+1}_{k-1}) H(V^{k+1}_k) \cdots H(V^{k+1}_{n-1})
\end{equation}
with an implicit constant depending only on $n$.
\end{lemma}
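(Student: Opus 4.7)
The plan is descending induction on $k$, from $k=n$ (a trivial base case) down to $k=1$. The key idea is to invoke Schmidt's inequality (Theorem~\ref{thm: Schmidt inequality}) not at level $k$ --- a direct application to $V^{k+1}_t = U^k_t + V^k_t$, $U^{k-1}_t = U^k_t \cap V^k_t$ only produces a \emph{lower} bound for $H(U^k_t)$, which is the wrong direction for \eqref{equation7} --- but one level higher, at $k+1$, and then to use the boundary identity \eqref{eq15} in the form $U^{k+1}_k = V^{k+1}_{k-1}$ to make the telescoping close.

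Fix $k \in \{1,\dots,n-1\}$ and $t$ with $k\le t\le n-1$. Replacing $k$ by $k+1$ in \eqref{eq10} and \eqref{equation2.11} yields $V^{k+2}_t = U^{k+1}_t + V^{k+1}_t$ and $U^k_t = U^{k+1}_t \cap V^{k+1}_t$. Schmidt's inequality then gives
\[
 H(V^{k+2}_t)\, H(U^k_t) \ll H(U^{k+1}_t)\, H(V^{k+1}_t),
 \qquad\text{hence}\qquad
 H(U^k_t) \ll \frac{H(U^{k+1}_t)\, H(V^{k+1}_t)}{H(V^{k+2}_t)}.
\]
Taking the product over $t=k,\dots,n-1$ and splitting off the $t=k$ factor of $\prod_{t} H(U^{k+1}_t)$ by means of $U^{k+1}_k = V^{k+1}_{k-1}$, we obtain
\[
 \prod_{t=k}^{n-1} H(U^k_t)
 \ll \Bigl(\, H(V^{k+1}_{k-1})\prod_{t=k}^{n-1} H(V^{k+1}_t)\Bigr) \cdot \frac{\prod_{t=k+1}^{n-1} H(U^{k+1}_t)}{\prod_{t=k}^{n-1} H(V^{k+2}_t)}.
\]
The parenthesized quantity is exactly the right-hand side of \eqref{equation7}, while the remaining ratio is $O(1)$ by the inductive hypothesis applied at level $k+1$. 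For $k=n-1$ both the numerator and denominator of this ratio equal $1$ (the denominator because $V^{n+1}_{n-1} = \bR^{n+1}$ has height $1$), which is also the content of the $k=n$ base case $1 \ll 1$.

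The main obstacle is identifying the correct level at which to invoke Schmidt's inequality: the natural level $k$ produces bounds in the wrong direction, whereas the shift to level $k+1$, combined with the boundary identity $U^{k+1}_k = V^{k+1}_{k-1}$ that converts one $U$-factor into precisely the extra $V$-factor appearing on the right of \eqref{equation7}, is what turns Schmidt's inequality into an effective upper bound and lets the induction close cleanly.
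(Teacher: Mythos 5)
Your proof is correct and is essentially the paper's own argument: a descending induction in which Schmidt's inequality is applied to the decomposition $V^{k+2}_t = U^{k+1}_t + V^{k+1}_t$, $U^k_t = U^{k+1}_t \cap V^{k+1}_t$, and the boundary identity \eqref{eq15} converts the stray $U$-factor into the extra $V$-factor on the right of \eqref{equation7}. The only difference is bookkeeping --- the paper indexes the step as passing from $k$ to $k-1$ and simplifies the product globally, while you pass from $k+1$ to $k$ and divide factor by factor --- so there is nothing further to add.
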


\begin{proof}
We proceed by descending induction on $k$. By \eqref{eq: thmN: eq1} and
\eqref{eq10}, we have
\[
 \bR^{n+1} = V^{n+1}_{n-1}= U^n_{n-1} + V^n_{n-1}.
\]
Since \eqref{equation2.11} gives $U^{n-1}_{n-1}
= U^n_{n-1} \cap V^n_{n-1}$, it follows from Schmidt's Theorem
\ref{eq: Schmidt inequality} that
\[
 H(U^{n-1}_{n-1}) \ll H(U^n_{n-1}) H(V^n_{n-1})
\]
because $H(\bR^{n+1})=1$.	As \eqref{eq15} gives $H(U^n_{n-1})
= H(V^n_{n-2})$, this proves \eqref{equation7} for $k = n-1$.

Assume that \eqref{equation7} holds for some $k$ with $1 <k \le n-1$.
By Theorem~\ref{eq: Schmidt inequality}, the relations \eqref{eq10} and
\eqref{equation2.11} imply that
\[
 H(V^{k+1}_t) \ll \frac{H(U^k_t) H(V^k_t)}{H(U^{k-1}_t)}
\]
for each $t = k-1,\ldots, n-1$. Combining this with the
induction hypothesis, we obtain
\[
 H(U^k_k) \cdots H(U^k_{n-1})
   \ll \frac{H(U^k_{k-1}) H(V^k_{k-1})}{H(U^{k-1}_{k-1})}
       \cdots
       \frac{H(U^k_{n-1}) H(V^k_{n-1})}{H(U^{k-1}_{n-1})}.
\]
After simplification, this leads to
\[
 H(U^{k-1}_{k-1}) \cdots H(U^{k-1}_{n-1})
   \ll H(U^k_{k-1}) H(V^k_{k-1}) \cdots H(V^k_{n-1}).
\]
Since $U^k_{k-1} = V^k_{k-2}$ by \eqref{eq15}, this yields
\eqref{equation7} with $k$ replaced by $k-1$. Thus, by
induction, \eqref{equation7} holds for all $k =1, \ldots,n-1$.
\end{proof}


\begin{proof}[Proof of Theorem~\ref{thm: lemme Nguyen}]
By Lemma~\ref{lem: lemme Nguyen hauteurs} applied with $k=1$,
we have
\[
 H(U^1_1) H(U^1_{2}) \cdots H(U^1_{n-1})
  \ll H(V^{2}_{0}) H(V^{2}_1) \cdots H(V^{2}_{n-1})
\]
where $U^1_t = \langle \ux_{i_t} \rangle_{\bR}$
and $V^2_t = \langle \ux_{i_t}, \ux_{i_t+1}  \rangle_{\bR} $
for $t =0, \ldots, n-1$.  The conclusion follows since
\[
 H(U_{t}^1) = \| \ux_{i_t} \| = X_{i_t}
 \quad\text{and}\quad
 H(V^2_{t}) \leq \| \ux_{i_t}  \wedge \ux_{i_t+1}
     \| \ll  X_{i_t+1} L_{i_t}
\]
for $t =0, \ldots, n-1$, with implicit constants depending only on $\bxi$.
\end{proof}

%
%

\section{Proof of Theorem~\ref{thm: thm transfert}}
\label{section: preuve du theoreme principal}

Suppose that $\bxi\in\bR^{n+1}$, $\SSS\subseteq\bR^{n+1}$, $A\in I$
and $\phi,\psi,\theta: I \rightarrow (0,\infty)$ satisfy
the hypotheses of Theorem~\ref{thm: thm transfert}, and
let $(\ux_i)_{i\geq 0}$ be a sequence of minimal points
for $\bxi$ with respect to $\SSS$. Since $\Phi_0$ is
monotonically increasing, the case $n=1$ of
Theorem~\ref{thm: thm transfert} is trivial. Thus we may suppose
that $n>1$. As in Section~\ref{Section:construction de Nguyen},
we write $X_i = \norm{\ux_i}$ and $L_i=L_{\bxi}(\ux_i) = \LL(X_i;S)$
for each $i\geq 0$. Choose $k_0\geq 0$ such that $X_{k_0} \geq A$.
Then, for each $i\geq k_0$ and $\ee\in (0,1]$ we have
\[
 \psi(X_i)\leq L_i
   = \LL(X_i;\SSS)
   = \LL(X_{i+1}-\ee;\SSS)
   \leq \phi(X_{i+1}-\ee)
\]
by definition of minimal points.  Letting $\ee$ tend to $0$, we
deduce that
\begin{equation}
\label{eq: X_i minore par X_i+1}
    L_i \leq \phi(X_{i+1})
    \et
    X_i \geq \theta(X_{i+1})\quad (i\geq k_0),
\end{equation}
because $\phi=\psi\circ\theta$ is continuous and $\psi$ is strictly
decreasing.  Then, for each $i_0\ge k_0$, the sequence of integers
$i_0<\dots<i_{n-1}$ given by Theorem~\ref{thm: lemme Nguyen}
satisfies
\begin{equation}
    \label{eq: inegalite de Nguyen}
        X_{i_1}\cdots X_{i_{n-1}}
        \leq c \Phi_0(X_{i_0+1})\cdots \Phi_0(X_{i_{n-1}+1}),
    \end{equation}
where $c=c(\bxi)>0$ and $\Phi_0(X) = X\phi(X)$ as in \eqref{eq: def Phi_k}.

\begin{lemma}
 \label{lem: lemme intermediaire}
Suppose that the functions $\Phi_0,\dots,\Phi_{m-2}$ are
monotonically increasing for some integer $m$ with $2\le m \le n$,
and let $j_0,\dots,j_{m-1}$ be integers with
$k_0\leq j_0<\dots < j_{m-1}$.  Then we have
\begin{equation}
  \label{eq: conclusion lemme 2}
  \Phi_0(X_{j_0+1})\cdots \Phi_0(X_{j_{m-1}+1})
    \leq  X_{j_1}\cdots X_{j_{m-1}}\Phi_{m-1}(X_{j_{m-1}+1}).
\end{equation}
\end{lemma}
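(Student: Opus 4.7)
The plan is to proceed by induction on $m\geq 2$, relying on the two key ingredients already at hand: the monotonicity assumptions on $\phi,\theta$ and $\Phi_0,\dots,\Phi_{m-2}$, and the inequality $X_i\geq\theta(X_{i+1})$ (for $i\geq k_0$) derived just before the statement from the minimal-point property. The guiding observation is that, since $\Phi_{k}(X)=X\phi(\theta^k(X))\cdots\phi(X)$, each step of the induction should convert a product $\phi_{k}$ of length $k+1$ into $\phi_{k+1}$ of length $k+2$ by shifting arguments through $\theta$.

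For the base case $m=2$, I want $\Phi_0(X_{j_0+1})\Phi_0(X_{j_1+1})\leq X_{j_1}\Phi_1(X_{j_1+1})$. As $(X_i)$ is increasing and $j_0+1\leq j_1$, the monotonicity of $\Phi_0$ gives $\Phi_0(X_{j_0+1})\leq\Phi_0(X_{j_1})=X_{j_1}\phi(X_{j_1})$. Combined with $X_{j_1}\geq\theta(X_{j_1+1})$ and the fact that $\phi$ is decreasing, this yields $\phi(X_{j_1})\leq\phi(\theta(X_{j_1+1}))$, and multiplying by $\Phi_0(X_{j_1+1})=X_{j_1+1}\phi(X_{j_1+1})$ produces $X_{j_1}\Phi_1(X_{j_1+1})$, since by definition $\Phi_1(X_{j_1+1})=X_{j_1+1}\phi(\theta(X_{j_1+1}))\phi(X_{j_1+1})$.

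For the inductive step, I would apply the inductive hypothesis to the sub-sequence $j_0<\dots<j_{m-2}$ (the hypotheses on $\Phi_0,\dots,\Phi_{m-3}$ still hold) and multiply both sides by $\Phi_0(X_{j_{m-1}+1})=X_{j_{m-1}+1}\phi(X_{j_{m-1}+1})$. After cancelling common factors, the problem reduces to
\[
\Phi_{m-2}(X_{j_{m-2}+1})\leq X_{j_{m-1}}\prod_{k=1}^{m-1}\phi(\theta^k(X_{j_{m-1}+1})).
\]
To establish this, I would first use that $\Phi_{m-2}$ is increasing together with $X_{j_{m-2}+1}\leq X_{j_{m-1}}$ to get $\Phi_{m-2}(X_{j_{m-2}+1})\leq\Phi_{m-2}(X_{j_{m-1}})=X_{j_{m-1}}\prod_{k=0}^{m-2}\phi(\theta^k(X_{j_{m-1}}))$, and then apply $X_{j_{m-1}}\geq\theta(X_{j_{m-1}+1})$ together with the monotonicity of $\theta$ and $\phi$ factor-by-factor to obtain $\phi(\theta^k(X_{j_{m-1}}))\leq\phi(\theta^{k+1}(X_{j_{m-1}+1}))$ for each $k=0,\dots,m-2$. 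Re-indexing gives exactly the required product.

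The main obstacle I foresee is the bookkeeping: one must carefully align the two ``shifts''---one from the monotonicity of $\Phi_{m-2}$ (replacing $X_{j_{m-2}+1}$ by $X_{j_{m-1}}$) and the other from the contraction $X_{j_{m-1}}\geq\theta(X_{j_{m-1}+1})$ (reinjecting one $\theta$ inside each argument)---so that the product of $\phi$-values coming out of $\Phi_{m-2}(X_{j_{m-1}})$ matches exactly the one appearing in $\Phi_{m-1}(X_{j_{m-1}+1})$. Domain considerations (existence of $\theta^k(X_{j_{m-1}+1})$ for $k\leq m-1\leq n-1$) are handled by condition \ref{condition 1 bis thm ppal} of the main theorem, so no additional work is required there.
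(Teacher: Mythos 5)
Your proof is correct and is essentially the same argument as the paper's: the paper runs the induction on an index $k$ from $1$ to $m$ (absorbing one factor $\Phi_0(X_{j_k+1})$ at a time into $\Phi_k(X_{j_k+1})$ while releasing a factor $X_{j_k}$), whereas you induct on $m$, but the key step is identical in both cases — use the monotonicity of $\Phi_{m-2}$ to pass from $X_{j_{m-2}+1}$ to $X_{j_{m-1}}$, then use $X_{j_{m-1}}\geq\theta(X_{j_{m-1}+1})$ together with the fact that $\phi_{m-2}$ is decreasing (your factor-by-factor comparison is exactly this) and the identity $\phi_{m-2}(\theta(X))\,\Phi_0(X)=\Phi_{m-1}(X)$.
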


\begin{proof}
    For simplicity set $Y_k = X_{j_k}$ and $Z_k= X_{j_k+1}$ for $k=0,\dots, m-1$.
    By induction on $k$, we show that
    \begin{equation}
        \label{eq: H.R. pour lemme 2}
        \prod_{\ell=0}^{m-1}\Phi_0(Z_\ell) \leq \Big(\prod_{\ell=1}^{k-1}Y_\ell\Big)\Phi_{k-1}(Z_{k-1}) \Big(\prod_{\ell=k}^{m-1} \Phi_0(Z_\ell)\Big) \quad(k=1,\dots, m).
    \end{equation}
    The case $k=1$ is an equality; there is nothing to prove. Suppose
    that \eqref{eq: H.R. pour lemme 2} holds for some $k$
    with $1\leq k < m$. We have $Z_{k-1}\leq Y_k$ since $j_{k-1} < j_k$
    and $\theta(Z_k) \leq Y_k$ by \eqref{eq: X_i minore par X_i+1}. Since
    $\Phi_{k-1}$ is monotonically increasing and $\phi_{k-1}$ is
    monotonically decreasing, we deduce that
    \begin{align}
    \label{eq: preuve lemme tranfert fonctionnel}
        \Phi_{k-1}(Z_{k-1})
          \leq \Phi_{k-1}(Y_{k})
          = Y_k\phi_{k-1}(Y_k)
          \leq Y_k\phi_{k-1}(\theta(Z_k)).
    \end{align}
    Since $\phi_{k-1}(\theta(Z_k))\Phi_0(Z_k) = \Phi_{k}(Z_k)$, we
    conclude that \eqref{eq: H.R. pour lemme 2} holds as well with $k$
    replaced by $k+1$. The inequality \eqref{eq: conclusion lemme 2}
    corresponds to the case $k=m$.
\end{proof}

\begin{lemma}
    \label{lem: lemme intermediaire 2}
    The functions $\Phi_0,\dots,\Phi_{n-2}$ are monotonically increasing.
\end{lemma}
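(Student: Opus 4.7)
The plan is to argue by induction on $m$, showing $\Phi_m$ is monotonically increasing for $m=0,1,\dots,n-2$. The base case $m=0$ is part of hypothesis~\ref{condition 3 thm ppal}. For the inductive step, assume $\Phi_0,\dots,\Phi_{m-1}$ are monotonically increasing with $1 \le m \le n-2$. Since $\Phi_m$ is already known to be monotonic on $[A,\infty)$, it suffices to prove $\Phi_m$ is \emph{unbounded above} there: a monotonically decreasing $\Phi_m$ would be bounded above by $\Phi_m(A)$, so unboundedness rules out the decreasing alternative.

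To produce unboundedness I would combine Nguyen's inequality~\eqref{eq: inegalite de Nguyen} with Lemma~\ref{lem: lemme intermediaire}, the latter applied to the \emph{tail} of the Nguyen chain. Fix $i_0 \ge k_0$ and let $i_0 < i_1 < \dots < i_{n-1}$ be the indices provided by Theorem~\ref{thm: lemme Nguyen}. The induction hypothesis legitimises Lemma~\ref{lem: lemme intermediaire} with parameter $m+1$, which I would invoke on the $m+1$ indices $j_\ell := i_{\ell+n-1-m}$ for $\ell = 0,\dots,m$. That choice places the resulting $\Phi_m$-factor at the largest available index $i_{n-1}$, and after substituting back into Nguyen's inequality a common block of $m$ factors $X_{i_p}$ cancels on both sides, leaving
\[
  \prod_{p=1}^{n-1-m} X_{i_p}
   \;\le\; c\,\Phi_m(X_{i_{n-1}+1}) \prod_{p=0}^{n-2-m} X_{i_p+1}\,\phi(X_{i_p+1}).
\]
The two $X$-products have the same length $n-1-m$, and pairing $X_{i_p}$ on the left with $X_{i_{p-1}+1}$ on the right makes each ratio at least $1$ (since $i_p \ge i_{p-1}+1$). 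Once these factors collapse only the $\phi$'s remain and I obtain
\[
  \Phi_m(X_{i_{n-1}+1}) \;\ge\; \frac{1}{c\,\prod_{p=0}^{n-2-m}\phi(X_{i_p+1})}.
\]
Letting $i_0 \to \infty$ sends every $X_{i_p+1}$ to infinity, so by~\ref{condition 1 thm ppal} each $\phi(X_{i_p+1})$ tends to $0$. The product is nonempty precisely because $m \le n-2$, and the right-hand side therefore blows up, producing arbitrarily large values of $\Phi_m$ on arguments tending to infinity. This completes the induction.

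The main obstacle I anticipate is pinpointing the correct window of the Nguyen chain when invoking Lemma~\ref{lem: lemme intermediaire}: only alignment of the $\Phi_m$-factor with the largest index $i_{n-1}$ makes the two $X$-products collapse cleanly while leaving a nonempty product of vanishing $\phi$'s. It is worth noting that the same scheme at $m = n-1$ produces an \emph{empty} $\phi$-product and hence only the constant lower bound~\eqref{thm: eq ppale thm transfert} of the main theorem, which is exactly why Lemma~\ref{lem: lemme intermediaire 2} cannot be pushed past $n-2$.
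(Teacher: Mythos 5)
Your proof is correct and follows essentially the same route as the paper: both hinge on applying Lemma~\ref{lem: lemme intermediaire} to the tail segment of the chain $i_0<\dots<i_{n-1}$ from Theorem~\ref{thm: lemme Nguyen}, bounding the remaining head factors via $\Phi_0(X_{i_p+1})\le \phi(X_{i_p+1})X_{i_{p+1}}$, and letting $i_0\to\infty$ so that the leftover nonempty product of $\phi$-values tends to $0$. The only difference is presentational -- you run a direct induction producing an explicit lower bound forcing $\Phi_m$ to be unbounded, whereas the paper takes the smallest failing index and derives a contradiction with \eqref{eq: inegalite de Nguyen} from the boundedness of a decreasing $\Phi_{m-1}$ -- and your index bookkeeping matches theirs after the shift $m\mapsto m+1$.
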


\begin{proof}
    Otherwise there is a largest integer $m$ with $2\leq m < n$
    such that $\Phi_0,\dots,\Phi_{m-2}$ are monotonically increasing.
    By our choice of $m$, the function $\Phi_{m-1}$ is monotonically
    decreasing. It is thus bounded from above. Let
    $i_0< i_1 < \dots < i_{n-1}$ be integers satisfying
    \eqref{eq: inegalite de Nguyen} for a choice of $i_0\geq k_0$.
    For simplicity we write $Y_k=X_{i_k}$ and $Z_k=X_{i_k+1}$
    ($k=0,\dots, n-1$).
    Then, Lemma~\ref{lem: lemme intermediaire} applied to $j_0=i_{n-m},\dots, j_{m-1} = i_{n-1}$ implies that
    \[
        \Phi_0(Z_{n-m})\cdots \Phi_0(Z_{n-1}) \leq Y_{n-m+1}\cdots Y_{n-1}\Phi_{m-1}(Z_{n-1}) = \GrO(Y_{n-m+1}\cdots Y_{n-1}),
    \]
    with an implicit constant depending only on $\Phi_{m-1}$,
    not on $i_0$. Furthermore for $k=0,\dots, n-m-1$ we have
    $\Phi_0(Z_k) = \phi(Z_k)Z_k \leq \phi(Z_k)Y_{k+1} = o(Y_{k+1})$
    as $i_0$ tends to infinity. Putting these inequalities together yields
    \begin{align*}
        \Phi_0(Z_{0})\cdots \Phi_0(Z_{n-1}) = o(Y_1\cdots Y_{n-1})
    \end{align*}
    as $i_0$ tends to infinity. This contradicts \eqref{eq: inegalite de Nguyen}.
\end{proof}

\begin{proof}[Proof of Theorem~\ref{thm: thm transfert}]
     Fix $i_0<\cdots<i_{n-1}$ satisfying \eqref{eq: inegalite de Nguyen} for some $i_0\geq k_0$. According to Lemma~\ref{lem: lemme intermediaire 2}, we may apply Lemma~\ref{lem: lemme intermediaire} with $m=n$ and $j_0=i_0,\dots,j_{m-1} = i_{n-1}$. This gives
    \begin{align*}
        \Phi_0(X_{i_0+1})\cdots \Phi_0(X_{i_{n-1}+1}) \leq X_{i_1}\cdots X_{i_{n-1}} \Phi_{n-1}(X_{i_{n-1}+1}),
    \end{align*}
    which together with \eqref{eq: inegalite de Nguyen} yields $\Phi_{n-1}(X_{i_{n-1}+1}) \geq c^{-1}$. Since the function $\Phi_{n-1}$ is monotonic, we deduce that $\Phi_{n-1}(X)\geq c^{-1}$ for each $X$ large enough, by letting $i_0$ go to infinity.
\end{proof}

\section{Proof of Theorem~\ref{thm: comportement points minimaux cas extremal}}
\label{section: preuves thm comportement points min}
    First, note that \eqref{eq: egalite expo dans thm points min} follows
    from Theorem~\ref{thm: transfert exposants avec constante}. So it only
    remains to prove the second part of
    Theorem~\ref{thm: comportement points minimaux cas extremal}.
    Let $(\ux_i)_{i\geq 0}$ be a sequence of minimal points for $\bxi$
    with respect to $\SSS$. For each $i\geq 0$, we write
    \[
     X_i = \norm{\ux_i} \et L_i=\LL(X_i;\SSS) = L_{\bxi}(\ux_i).
    \]
    The sequence $(\uy_i)_{i\geq 0}$ will be constructed as a subsequence
    of $(\ux_i)_i$ so that Condition~\ref{condition iv thm suite} of
    Theorem~\ref{thm: comportement points minimaux cas extremal} will be
    automatically satisfied. In this section, all implicit constants depend
    only on $\bxi,a,b,\alpha,\beta$. For each $X>0$, we set
    \begin{equation*}
        (\psi,\phi,\theta)(X)
        = \Big(bX^{-\beta},aX^{-\alpha},
               \Big(\frac{a}{b}\Big)^{-1/\beta}X^{\alpha/\beta} \Big),
    \end{equation*}
    as in \eqref{eq: def psi, phi, theta for power functions}.  Then,
    for $k=0,\dots,n-1$, we denote by $\phi_k$ and $\Phi_k$ the
    functions defined
    on $(0,\infty)$ by the formulas
    \eqref{eq: def phi_0}--\eqref{eq: def Phi_k}
    from Theorem~\ref{thm: thm transfert}.
    We also fix an index $\ell_0$ such that the main hypothesis
    \eqref{eq: encadrement initial version exposants} is satisfied for
    each $X\geq X_{\ell_0}$.

    Consider the sequence $i_0 < i_1 < \dots < i_{n-1}$ given
    by Theorem~\ref{thm: lemme Nguyen} for a choice of $i_0\geq \ell_0$.
    For each $k=0,\dots,n-1$, we set
    \begin{equation}
     \label{eq: preuve thm 2.1 choix y,Y,z,Z}
     (\uy_k,Y_k) = (\ux_{i_k}, X_{i_k})
     \et
     (\uz_k,Z_k) = (\ux_{i_k+1}, X_{i_k+1}).
    \end{equation}
    By construction, we have
    \begin{equation}
        \label{eq: preuve thm 2.1 inter 0}
        \langle\uy_0,\uz_0\rangle_\bR=\langle\uy_0,\uy_1\rangle_\bR
        \et
        \langle\uy_0,\dots,\uy_{n-1},\uz_{n-1}\rangle_\bR=\bR^{n+1}.
    \end{equation}
    Using \eqref{eq: eq inter triplet compatibles functions inter},
    we also find that
    \begin{align}
         \label{eq: preuve thm 2.1 inter 0 explicitation phi_k}
        \Phi_{k}(X) = X\phi_{k}(X) = c_{k}X^{\ee_k}\quad \textrm{with } \ee_k=1-\alpha-\dots-\frac{\alpha^{k+1}}{\beta^{k}},
    \end{align}
    for each $k=0,\dots,n-1$ and each $X>0$, where $c_{k} > 0$ depends
    only on $a,b,\alpha,\beta$. Note that
    \[
     \ee_0 > \cdots > \ee_{n-1} = \ee \geq 0
    \]
    where $\ee$ is given by \eqref{eq: egalite expo dans thm points min}.
    We find
    \begin{align*}
        c^{-1}\prod_{k = 1}^{n-1}Y_k &\leq \prod_{k = 0}^{n-1}Z_kL_{\bxi}(\uy_k)
          & & \textrm{by Theorem~\ref{thm: lemme Nguyen}}, \\
        & \leq \prod_{k = 0}^{n-1}\Phi_0(Z_k)
          & & \textrm{by \eqref{eq: X_i minore par X_i+1}}, \\
        & \leq \Big(\prod_{k = 1}^{n-1}Y_k\Big)\Phi_{n-1}(Z_{n-1})
          & & \textrm{by Lemma~\ref{lem: lemme intermediaire} with $m=n$},\\
        & = \Big(\prod_{k = 1}^{n-1}Y_k\Big)c_{n-1}Z_{n-1}^\ee
          & & \textrm{by \eqref{eq: preuve thm 2.1 inter 0 explicitation phi_k}}.
    \end{align*}
    This uses sequentially the inequalities
    \[
        L_{\bxi}(\uy_k)\leq \phi(Z_k) \quad (0\leq k < n),
    \]
    coming from \eqref{eq: X_i minore par X_i+1} as well as the
    inequalities
    \[
        \Phi_{k-1}(Z_{k-1})\leq \Phi_{k-1}(Y_k)
        \et
        \phi_{k-1}(Y_k) \leq \phi_{k-1}(\theta(Z_{k}))
        \quad (1\leq k < n)
    \]
    coming from \eqref{eq: preuve lemme tranfert fonctionnel} in the proof
    of Lemma~\ref{lem: lemme intermediaire} with $m=n$.
    In each of these inequalities the ratio of the right-hand side divided
    by the left-hand side is therefore at most $cc_{n-1}Z_{n-1}^{\ee}$.
    Using \eqref{eq: preuve thm 2.1 inter 0 explicitation phi_k} and the
    fact that for $k=1,\dots,n-1$ we have
    \[
        \ee_{k-1}
          = \ee + \frac{\alpha^{k+1}}{\beta^{k}} + \dots
                + \frac{\alpha^{n}}{\beta^{n-1}}
         \geq \alpha\Big(\frac{\alpha}{\beta}\Big)^{k}
        \et
        1-\ee_{k-1} \geq \alpha,
    \]
    we thus get the following estimates
    \begin{align}
        \label{eq: L(Y_k) proche Z_k}
        \big|\log L_{\bxi}(\uy_k)+ \alpha\log Z_k \big|
          & \leq  \GrO(1) + \ee \log Z_{n-1}
          &&(0\leq k < n),\\
        \label{eq: Y_k Z_k-1}
        \big|\log Y_k - \log Z_{k-1} \big|
          & \leq \GrO(1) + \frac{\ee}{\alpha}\Big(\frac{\beta}{\alpha}\Big)^{k} \log Z_{n-1}
          && (1\leq k < n),\\
        \label{eq: Y_k proche a/b*Z_k}
        \big|\log Y_k - \frac{\alpha}{\beta}\log Z_k \big|
          & \leq \GrO(1) + \frac{\ee}{\alpha} \log Z_{n-1}
          && (1\leq k < n).
    \end{align}

    Suppose from now on that $\epsilon$ satisfies the inequality
    \eqref{eq: preuve thm 2.1 def ee_0} of
    Theorem~\ref{thm: comportement points minimaux cas extremal}.
    We distinguish two cases.


   \subsection*{First case:} $\alpha<\beta$.
   We start by noting that
   \begin{align}
   \label{eq: preuve thm 2.1 inter 1.2 bis}
    \log Z_{n-1}
      \leq  \GrO(1) + 2\Big(\frac{\beta}{\alpha}\Big)^{n-k}\log Z_{k-1}
      \quad (1\leq k < n).
   \end{align}
   Indeed, \eqref{eq: Y_k Z_k-1} and \eqref{eq: Y_k proche a/b*Z_k}
   imply that
   \begin{equation*}
    \log Z_k
      \leq \GrO(1) + \frac{\beta}{\alpha}\log Z_{k-1}
           + \frac{2\ee}{\alpha}\Big(\frac{\beta}{\alpha}\Big)^{k+1} \log Z_{n-1}
    \quad (1\leq k < n),
   \end{equation*}
   and by descending induction starting with $k=n-1$, we obtain
   \begin{align*}
    \log Z_{n-1}
      \leq \GrO(1) + \Big(\frac{\beta}{\alpha}\Big)^{n-k}\log Z_{k-1}
           + \frac{2(n-k)\ee}{\alpha}\Big(\frac{\beta}{\alpha}\Big)^{n} \log Z_{n-1}
    \quad (1\leq k < n).
   \end{align*}
   This yields \eqref{eq: preuve thm 2.1 inter 1.2 bis} since
   by \eqref{eq: preuve thm 2.1 def ee_0} the coefficient of $\log Z_{n-1}$
   in the right-hand side is less than $1/2$.

   Combining \eqref{eq: Y_k proche a/b*Z_k} and
   \eqref{eq: preuve thm 2.1 inter 1.2 bis} together with
   $Z_{k-1}\leq Y_k$, we obtain
   \begin{equation}
    \label{eq: preuve thm 2.1 relation Z_k Y_k}
    \big|\alpha\log Z_k -\beta\log Y_k \big|
        \leq \GrO(1) + 2\ee\Big(\frac{\beta}{\alpha}\Big)^{n-k+1}\log Y_k
     \quad(1\leq k < n).
   \end{equation}
   Thus there exists a constant $C>0$ (depending only on
   $\xi,a,b,\alpha,\beta$) such that
   \begin{equation}
    \label{eq: preuve thm 2.1 inter 2.2}
    \Big|\log X_{i+1} -\frac{\beta}{\alpha}\log X_{i} \Big|
      \leq C + 2\frac{\ee}{\alpha}\Big(\frac{\beta}{\alpha}\Big)^{n}\log X_{i}
   \end{equation}
   for each $i$ among $\{i_1,i_2,\dots,i_{n-1}\}$.  By \eqref{eq: Y_k Z_k-1}
   and \eqref{eq: preuve thm 2.1 inter 1.2 bis}, we also have
   \begin{equation}
    \label{eq: preuve thm 2.1 inter 2.3}
     \big|\log Y_k- \log Z_{k-1} \big|
      \leq \GrO(1) + 2\frac{\ee}{\alpha}\Big(\frac{\beta}{\alpha}\Big)^{n} \log Z_{k-1}
     \quad (1\leq k < n).
   \end{equation}
   For the intermediate indices $i$ with $i_{k-1} < i < i_{k}$ for
   some $k\in\{1,\dots,n-1\}$, we have $Z_{k-1}\leq X_i < X_{i+1}
   \leq Y_k$, and the above estimate yields
   \begin{equation}
    \label{eq: preuve thm 2.1 inter 2.4}
    \big| \log X_{i+1} - \log X_i \big|
        \leq C + 2\frac{\ee}{\alpha}\Big(\frac{\beta}{\alpha}\Big)^{n} \log X_{i},
   \end{equation}
   at the expense of replacing $C$ by a larger constant if necessary.

   By the hypothesis \eqref{eq: preuve thm 2.1 def ee_0} on $\epsilon$
   and the fact that $\beta/\alpha > 1$,
   the inequalities \eqref{eq: preuve thm 2.1 inter 2.2} and
   \eqref{eq: preuve thm 2.1 inter 2.4} cannot hold simultaneously
   for any sufficiently large integer $i$, say for any $i\ge \ell_1$
   where $\ell_1\ge \ell_0$.  Define $I$ to be the set of all integers
   $i\geq \ell_1$ for which \eqref{eq: preuve thm 2.1 inter 2.2} holds.
   Then, for a sequence $i_0<i_1<\cdots<i_{n-1}$ as above, with
   $i_0\ge \ell_1$, we have $I\cap (i_0,i_{n-1}]=\{i_1,i_2,\dots,i_{n-1}\}$.
   In particular, the set $I$ is infinite and, if we choose $i_0\in I$,
   then $i_0,i_1,\dots,i_{n-1}$ are $n$ consecutive elements of $I$.

   Denote by $i_0 < i_1 < \cdots$ the elements of $I$ and define
   $\uy_k$, $Y_k$, $\uz_k$ and $Z_k$ by
   \eqref{eq: preuve thm 2.1 choix y,Y,z,Z} for each $k\ge 0$.
   By the above, the relations \eqref{eq: preuve thm 2.1 inter 0}
   extend to
   \[
     \langle\uy_k,\uz_k\rangle_\bR=\langle\uy_k,\uy_{k+1}\rangle_\bR
     \et
     \langle\uy_k,\dots,\uy_{k+n-1},\uz_{k+n-1}\rangle_\bR=\bR^{n+1}
   \]
   for each $k\ge 0$. Thus $\{\uy_k,\dots,\uy_{k+n-1},\uy_{k+n}\}$
   spans $\bR^{n+1}$ for each $k\ge 0$ and so $(\uy_k)_{k\ge 0}$
   satisfies Condition (iii) of the theorem.  Applying
   \eqref{eq: L(Y_k) proche Z_k},
   \eqref{eq: preuve thm 2.1 inter 1.2 bis},
   \eqref{eq: preuve thm 2.1 relation Z_k Y_k}
   and \eqref{eq: preuve thm 2.1 inter 2.3}
   with $k=n-1$ (which is possible since $n\ge 2$), we also obtain
   that
   \begin{align}
     \big|\log L_{\bxi}(\uy_k)+ \alpha\log Z_k \big|
      &\leq  \GrO(1) + \ee \log Z_k,
   \label{preuve1.4:eq5}\\
     \log Z_k
      &\leq  \GrO(1) + 2\Big(\frac{\beta}{\alpha}\Big)\log Z_{k-1},
   \label{preuve1.4:eq6}\\
     \big|\alpha\log Z_k -\beta\log Y_k \big|
      &\leq \GrO(1) + 2\ee\Big(\frac{\beta}{\alpha}\Big)^2\log Y_k,
   \label{preuve1.4:eq7}\\
     \big|\log Y_k- \log Z_{k-1} \big|
      &\leq \GrO(1) + 2\frac{\ee}{\alpha}\Big(\frac{\beta}{\alpha}\Big)^{n} \log Z_{k-1},
   \label{preuve1.4:eq8}
   \end{align}
   for each $k\ge n-1$.  Combining the first three inequalities
   \eqref{preuve1.4:eq5}--\eqref{preuve1.4:eq7}, we find
   \begin{align*}
    \big|\log L_{\bxi}(\uy_k) + \beta\log Y_k \big|
      &\leq  \GrO(1) + 2\ee \Big(\frac{\beta}{\alpha}\Big)\log Z_{k-1}
         + 2\ee\Big(\frac{\beta}{\alpha}\Big)^2\log Y_k \\
      &\leq  \GrO(1) + 4\ee\Big(\frac{\beta}{\alpha}\Big)^2\log Y_k
   \end{align*}
   since $Z_{k-1}\le Y_k$.  Thus Condition (ii) is fulfilled.
   Finally, replacing $k$ by $k+1$ in \eqref{preuve1.4:eq8} and
   using \eqref{preuve1.4:eq7}, we find
   \begin{align*}
    \big|\alpha\log Y_{k+1} -\beta\log Y_k \big|
     &\leq \GrO(1)
        + 2\ee\Big(\frac{\beta}{\alpha}\Big)^{n} \log Z_{k}
        + 2\ee\Big(\frac{\beta}{\alpha}\Big)^2\log Y_k \\
     &\leq \GrO(1)
        + 4\ee\Big(\frac{\beta}{\alpha}\Big)^{n} \log Y_{k+1}
   \end{align*}
   since $Y_k\le Z_k\le Y_{k+1}$.  Thus Condition (i) is satisfied
   as well.


    \subsection*{Second case:} $\alpha=\beta$.
    Then we have $\ee=0$ and $\alpha=\beta=1/n$. Moreover,
    the hypothesis \eqref{eq: encadrement initial version exposants}
    implies that
    \begin{equation}
      \label{eq:preuve1.4:cas2:eq1}
      L_{\bxi}(\ux_i) \asymp X_i^{-1/n}
      \quad (i\ge 0).
    \end{equation}
    Thus the estimate \eqref{eq: L(Y_k) proche Z_k} with $k=0$
    yields $Y_0\asymp Z_0$, while \eqref{eq: Y_k Z_k-1}
    and \eqref{eq: Y_k proche a/b*Z_k} simplify to
    \[
     Z_0 \asymp Y_1 \asymp Z_1 \asymp\dots \asymp Y_{n-1} \asymp Z_{n-1}.
    \]
    Thus $\{\ux_{i_0},\ux_{i_1},\dots,\ux_{i_{n-1}},\ux_{i_{n-1}+1}\}$
    is a basis of $\bR^{n+1}$ with
    \begin{equation}
      \label{eq:preuve1.4:cas2:eq2}
      \norm{\ux_{i_0}}\asymp
      \norm{\ux_{i_1}}\asymp\cdots\asymp \norm{\ux_{i_{n-1}}}
      \asymp \norm{\ux_{i_{n-1}+1}}.
    \end{equation}

    We now construct recursively a subsequence $(\uy_k)_{k\geq 0}$
    of $(\ux_i)_{i\geq 0}$ such that
    \[
        \norm{\uy_k} \asymp \norm{\uy_{k+1}}
        \et
        \langle\uy_k,\dots,\uy_{k+n}\rangle_\bR=\bR^{n+1}
    \]
    for each $k\geq 0$. To start, we simply choose $i_0=\ell_0$ and set
    $(\uy_0,\dots,\uy_{n}) = (\ux_{i_0},\dots,\ux_{i_{n-1}},\ux_{i_{n-1}+1})$.
    Now suppose that $\uy_0,\dots,\uy_{k}$ have been constructed for
    an index $k\geq n$. Then $W=\langle \uy_{k-n+1},\dots,\uy_{k}\rangle_\bR$
    is a subspace of $\bR^{n+1}$ of dimension $n$.  We take $i_0$
    to be the index for which $\uy_{k} = \ux_{i_0}$.  By the above
    there exists a point $\uy_{k+1}$ among
    $\ux_{i_1},\dots,\ux_{i_{n-1}},\ux_{i_{n-1}+1}$ which lies
    outside of $W$.  Then $\{\uy_{k-n+1},\dots,\uy_{k+1}\}$ spans
    $\bR^{n+1}$, and by \eqref{eq:preuve1.4:cas2:eq2} we have
    $\norm{\uy_{k+1}} \asymp \norm{\uy_k}$.

    This sequence $(\uy_k)_{k\geq0}$ has all the requested properties
    since it also satisfies
    $L_{\bxi}(\uy_k) \asymp \norm{\uy_k}^{-1/n}$ for each $k\ge 0$
    by \eqref{eq:preuve1.4:cas2:eq1}.

\section{Applications}
\label{Section: applications}

The following result is implicit in the thesis of the first author. It follows from the proof of Theorem~2.1.3 of \cite{N2014} although the theorem by itself is a weaker assertion. We give a short proof based on Theorem~\ref{thm: transfert exposants avec constante}.

\begin{theorem}
  \label{thm: cor thm 1.2}
  Let $\theta$ be a real algebraic number of degree $n\geq 2$ and let $\xi\in\bR\setminus\bQ(\theta)$. Then the point $\bxi=(1,\theta,\dots,\theta^{n-1},\xi)\in\bR^{n+1}$ satisfies
  \begin{equation}
    \label{eq 1 thm-cor Van}
    \hlambda(\bxi) \leq \lambda_n
  \end{equation}
  where $\lambda_n$ is the unique positive solution of
  \begin{equation*}
    x+(n-1)x^2+\dots+(n-1)^{n-1}x^n = 1.
  \end{equation*}
  Moreover precisely, we have
  \begin{equation}
  \label{eq 2 thm-cor Van}
    \limsup_{X\rightarrow\infty}X^{\lambda_n}\LL(X) > 0.
  \end{equation}
\end{theorem}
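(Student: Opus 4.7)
The strategy has three stages: bound $\lambda(\bxi)$ from above using Schmidt's theorem, then apply Theorem~\ref{thm: transfert exposants avec constante} to bound $\hlambda(\bxi)$, and finally address the $\limsup$ inequality~\eqref{eq 2 thm-cor Van}.

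For the first stage, observe that for any nonzero $\ux=(x_0,\ldots,x_n)\in\bZ^{n+1}$ with $\ux':=(x_0,\ldots,x_{n-1})\neq 0$, the definition of $L_{\bxi}$ gives
\[
    L_{\bxi}(\ux) \geq \max_{1\leq k\leq n-1}|x_k - \theta^k x_0|,
\]
which is the irrationality measure of $(1,\theta,\ldots,\theta^{n-1})\in\bR^n$ at $\ux'$; the remaining case $\ux'=0$ forces $|x_n|\geq 1$ and hence $L_{\bxi}(\ux)\geq 1$. Since $\theta$ is algebraic of degree $n$, Schmidt's theorem on simultaneous rational approximation supplies, for each $\eta > 0$, a constant $c_\eta > 0$ such that $\LL(X) \geq c_\eta X^{-1/(n-1)-\eta}$ for all $X$ large. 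Letting $\eta\to 0^+$ yields $\lambda(\bxi) \leq 1/(n-1)$. Applying Theorem~\ref{thm: transfert exposants avec constante} with $\SSS=\bZ^{n+1}$ gives the Marnat-Moshchevitin inequality $\hlambda(\bxi) + \hlambda(\bxi)^2/\lambda(\bxi) + \cdots + \hlambda(\bxi)^n/\lambda(\bxi)^{n-1} \leq 1$, whose left-hand side is decreasing in $\lambda$; since $\lambda(\bxi)\leq 1/(n-1)$, we obtain
\[
    \hlambda(\bxi) + (n-1)\hlambda(\bxi)^2 + \cdots + (n-1)^{n-1}\hlambda(\bxi)^n \leq 1,
\]
which by the defining equation of $\lambda_n$ is exactly $\hlambda(\bxi)\leq\lambda_n$, proving~\eqref{eq 1 thm-cor Van}.

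For~\eqref{eq 2 thm-cor Van}, the conclusion is immediate when $\hlambda(\bxi)<\lambda_n$, since then $\limsup_{X\to\infty}X^{\lambda_n}\LL(X)=+\infty$ by definition of the uniform exponent. Otherwise $\hlambda(\bxi)=\lambda_n$, and the Marnat-Moshchevitin inequality combined with $\lambda(\bxi)\leq 1/(n-1)$ forces $\lambda(\bxi)=1/(n-1)$, placing us in the equality case of~\eqref{eq: thm 2 egalite ratio exposants} at $(\alpha,\beta)=(\lambda_n, 1/(n-1))$. The quantitative conclusion~\eqref{eq: thm 1.2 estimation des constantes} of Theorem~\ref{thm: transfert exposants avec constante} would then directly yield $\limsup X^{\lambda_n}\LL(X)>0$, provided one had the exact lower bound $\LL(X)\geq bX^{-1/(n-1)}$ with some fixed $b>0$ for $X$ large.

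The main obstacle is that Schmidt's theorem only supplies such a bound with an $\eta$-loss whose constant degenerates as $\eta\to 0^+$, so the extremal case requires additional care. One resolution is to apply Theorem~\ref{thm: transfert exposants avec constante} at the perturbed parameters $(\lambda_n, 1/(n-1)+\eta)$ and propagate the quantitative bound $a^n(a/b)^\delta\geq c(\bxi)$ derived in its proof as $\eta\to 0^+$; equivalently, one invokes Theorem~\ref{thm: comportement points minimaux cas extremal} at those perturbed parameters to produce a sequence of minimal points $\uy_i$ satisfying $\|\uy_{i+1}\|\asymp\|\uy_i\|^{1/((n-1)\lambda_n)}$ and $L_{\bxi}(\uy_i)\asymp\|\uy_i\|^{-1/(n-1)}$, up to corrections controlled by $\eta$, so that $\|\uy_{i+1}\|^{\lambda_n}L_{\bxi}(\uy_i)\asymp 1$; evaluating $\LL(X)$ just below $X=\|\uy_{i+1}\|$ then gives the required positive lower bound on the limsup.
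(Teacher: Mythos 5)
Your derivation of \eqref{eq 1 thm-cor Van} is correct, but the argument you sketch for \eqref{eq 2 thm-cor Van} has a genuine gap, and it is exactly at the point where the paper's proof does something different. The paper does not invoke Schmidt's subspace theorem at all: it applies Liouville's inequality to the linear form $y_0+\theta y_1+\dots+\theta^{n-1}y_{n-1}$ (a norm-form estimate, available precisely because $1,\theta,\dots,\theta^{n-1}$ is a basis of $\bQ(\theta)$ over $\bQ$) and then Khintchine's transference principle, obtaining the \emph{clean} lower bound $\LL(X)\geq c_2X^{-1/(n-1)}$ for all $X\geq 1$, with no $\eta$-loss. With that bound, hypothesis \eqref{eq: encadrement initial version exposants thm1.2} holds exactly at $(\alpha,\beta)=(\lambda_n,1/(n-1))$ in the nontrivial case where $\LL(X)\leq X^{-\lambda_n}$ for all large $X$; by the definition of $\lambda_n$ this is the equality case of \eqref{eq: thm 2 egalite ratio exposants}, and \eqref{eq: thm 1.2 estimation des constantes} gives \eqref{eq 2 thm-cor Van} at once. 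Your $\eta$-lossy lower bound is too weak for this step, as you yourself observe; it is also obtained at the cost of much heavier machinery than the paper needs.

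The proposed repair by perturbation does not close the gap. Taking $(\alpha,\beta)=(\lambda_n,1/(n-1)+\eta)$ with $\eta>0$ puts you strictly \emph{outside} the equality case: the quantity $\ee=1-(\alpha+\alpha^2/\beta+\dots+\alpha^n/\beta^{n-1})$ is then positive, so the bound $a^n(a/b)^\delta\geq c$ is not available (the theorem only yields $a^n(a/b)^\delta X^{\ee}\geq c$ for large $X$, which carries no information when $\ee>0$ since $\Phi_{n-1}$ is increasing and unbounded). Likewise, Theorem~\ref{thm: comportement points minimaux cas extremal} at these parameters gives conditions \ref{condition i thm suite} and \ref{condition ii thm suite} only with error terms of the shape $C+4\ee(\beta/\alpha)^n\log\norm{\uy_{i+1}}$, which for fixed $\eta$ (hence fixed $\ee>0$) grow without bound in $i$; what one actually gets is $\norm{\uy_{i+1}}^{\lambda_n}L_{\bxi}(\uy_i)\gg\norm{\uy_{i+1}}^{-\kappa\ee}$ for some $\kappa>0$, which tends to $0$ and says nothing about the limsup. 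Nor can one let $\eta\to 0^+$ at the end, since the constants $c_\eta$ and $C=C(\eta)$ degenerate. The missing ingredient is precisely the sharp, $\eta$-free lower bound $\LL(X)\gg X^{-1/(n-1)}$; once you establish it (elementarily, via Liouville and transference as above), your treatment of the extremal case goes through as you describe.
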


\begin{proof}
  By Liouville's inequality, there exists a constant $c_1=c_1(\theta)>0$ such that the system
    \begin{align*}
    \max_{1\leq k\leq n-1} |y_k|\leq X^{1/(n-1)} \quad\textrm{and}\quad |y_0+\theta y_1+\dots + \theta^{n-1}y_{n-1}| \leq c_1X^{-1}
  \end{align*}
  admits no non-zero integer solution $(y_0,\dots,y_{n-1})$ for any $X\geq 1$. By Khinthine's transference principle \cite[Theorem~5A]{Schmidt1980}, there is therefore a constant $c_2=c_2(\theta)>0$ such that the dual system
  \begin{align}
    \label{eq 2 bis thm-cor Van proof}
    |x_0|\leq X \quad\textrm{and}\quad \max_{1\leq k \leq n-1}|x_k-\theta^k x_0| \leq c_2 X^{-1/(n-1)}
  \end{align}
  admits no non-zero integer solution $(x_0,\dots,x_{n-1})$ for each $X\geq 1$. Thus, we have
  \begin{equation}
   \label{eq 0 thm-cor Van proof}
    c_2X^{-1/(n-1)} \leq \LL(X)
  \end{equation}
  for each $X\geq 1$. If $\LL(X)\geq X^{-\lambda_n}$ for arbitrarily large values of $X$, then \eqref{eq 2 thm-cor Van} is immediate. Otherwise, Condition~\eqref{eq: encadrement initial version exposants thm1.2} of Theorem~\ref{thm: transfert exposants avec constante} is fulfilled with $\alpha = \lambda_n$ and $\beta=1/(n-1)$. As this yields an equality in \eqref{eq: thm 2 egalite ratio exposants}, we again get \eqref{eq 2 thm-cor Van} as a consequence of \eqref{eq: thm 1.2 estimation des constantes}.
\end{proof}

In the case $n=2$, the number $\lambda_2\cong 0.618$ is the inverse of the golden ratio and it follows from \cite{roy2013conics}~-- which more generally deals with approximation to real points on conics in $\bP^2(\bR)$-- that the upper bound \eqref{eq 1 thm-cor Van} is best possible: for any quadratic number $\theta\in\bR\setminus\bQ$, there exists $\xi\in\bR\setminus\bQ(\theta)$ such that $\bxi=(1,\theta,\xi)$ satisfies $\limsup X^{\lambda_2}\LL(X) <\infty$ and $\hlambda(\bxi)=\lambda_2$. For $n\geq 3$ the optimal upper bound is not known.

In \cite{poelsroy2019}, the second and the third authors apply Theorems~\ref{thm: transfert exposants avec constante} and~\ref{thm: comportement points minimaux cas extremal} to extend the results of \cite{kleinbock2018simultaneous} and \cite{roy2013conics} to points on general quadratic hypersurfaces of $\bP^n(\bR)$ defined over $\bQ$.

\bibliographystyle{abbrv}

\begin{thebibliography}{10}

\bibitem{jarnik1938khintchineschen}
V.~Jarn{\'\i}k.
\newblock {Zum Khintchineschen "{\"U}bertragungssatz"}.
\newblock {\em {T}rudy {T}bilis {M}at. {I}nstituta},
  3:193--212, 1938.

\bibitem{khintchine1926klasse}
A.~Khintchine.
\newblock {{\"U}ber eine Klasse linearer diophantischer Approximationen}.
\newblock {\em Rendiconti del Circolo Matematico di Palermo (1884-1940)},
  50(2):170--195, 1926.

\bibitem{khintchine1926metrischen}
A.~Khintchine.
\newblock {Zur metrischen Theorie der diophantischen Approximationen}.
\newblock {\em Math. Z.}, 24(1):706--714, 1926.

\bibitem{kleinbock2018simultaneous}
D.~Kleinbock and N.~Moshchevitin.
\newblock Simultaneous Diophantine approximation: sums of squares and
  homogeneous polynomials.
\newblock {\em Acta Arith.}, 190(1):87--100, 2019.

\bibitem{laurent2006exponents}
M.~Laurent.
\newblock Exponents of {D}iophantine approximation in dimension two.
\newblock {\em Canad. J. Math}, 61:165--189, 2009.

\bibitem{marnat2018optimal}
A.~Marnat and N.~Moshchevitin.
\newblock An optimal bound for the ratio between ordinary and uniform exponents
  of {D}iophantine approximation.
\newblock {\em arXiv:1802.03081 [math.NT]}, 40 pages, 2018.

\bibitem{moshchevitin2012exponents}
N.~Moshchevitin.
\newblock Exponents for three-dimensional simultaneous {D}iophantine
  approximations.
\newblock {\em Czechoslovak Math. J.}, 62(1):127--137, 2012.

\bibitem{N2014}
N.~A.~V. Nguyen.
\newblock {\em On some problems in {T}ranscendental number theory and
  {D}iophantine approximation}.
\newblock PhD thesis, University of Ottawa, 2014;
  https://ruor.uottawa.ca/handle/10393/30350.

\bibitem{poelsroy2019}
A.~Poëls and D.~Roy.
\newblock Rational approximation to real points on quadratic hypersufarces.
\newblock {\em arXiv preprint}, 29 pages, 2019.

\bibitem{PhDMartin2019}
M.~Rivard-Cooke.
\newblock {\em Parametric {G}eometry of {N}umbers}.
\newblock PhD thesis, University of Ottawa, 2019;
  https://ruor.uottawa.ca/handle/10393/38871.

\bibitem{roy2004approximation}
D.~Roy.
\newblock Approximation to real numbers by cubic algebraic integers {I}.
\newblock {\em Proc. London Math. Soc.}, 88(1):42--62, 2004.

\bibitem{roy2013conics}
D.~Roy.
\newblock Rational approximation to real points on conics.
\newblock {\em Ann. Inst. Fourier}, 63(6):2331--2348, 2013.

\bibitem{Schmidt1980}
W.~M.~Schmidt.
\newblock {\em Diophantine {A}pproximation},
  {\em Lecture {N}otes in {M}athematics}, vol.~785,
\newblock Springer-Verlag, 1980.

\bibitem{Schmidt1991}
W.~M.~Schmidt.
\newblock {\em Diophantine {A}pproximations and {D}iophantine {E}quations},
  {\em Lecture {N}otes in {M}athematics}, vol.~1467,
\newblock Springer-Verlag, 1991.

\bibitem{schmidt2013simultaneous}
W.~M. Schmidt and L.~Summerer.
\newblock Simultaneous approximation to three numbers.
\newblock {\em Mosc. J. Comb. Number Theory}, 3(1):84--107, 2013.

\end{thebibliography}

\end{document}